\newtheorem{thm}{Theorem}[section]
\newtheorem{defi}[thm]{Definition}
\newtheorem{lem}[thm]{Lemma}
\newtheorem{prop}[thm]{Proposition}
\newtheorem{rmk}[thm]{Remark}
\newtheorem{eg}[thm]{Example}
\newcommand{\F}{\mathbb{F}_{q}}
\newcommand{\ZN}{\mathbb{Z}_{N}}
\newcommand{\Fn}{\mathbb{F}_{q^n}}
\newcommand{\Tn}{\operatorname{Tr}_{\mathbb{F}_{q^n}/\F}}
\newcommand{\supp}{\operatorname{supp}}
\newcommand{\lcm}{\operatorname{lcm}}
\begin{document}
 \title{A new proof of the Hansen-Mullen irreducibility conjecture}
 
 \author{Aleksandr Tuxanidy and Qiang Wang}

\address{School of Mathematics and Statistics, Carleton
University,
 1125 Colonel By Drive, Ottawa, Ontario, K1S 5B6,
Canada.} 

\email{AleksandrTuxanidyTor@cmail.carleton.ca, wang@math.carleton.ca}

\keywords{irreducible polynomials, primitive polynomials, Hansen-Mullen conjecture, symmetric functions, $q$-symmetric, discrete Fourier transform, finite fields.\\}

\thanks{The research of Qiang Wang is partially supported by NSERC of Canada.}
\date{\today}

 \font\Bbb msbm10 at 12pt
 
\begin{abstract}
We give a new proof of the Hansen-Mullen irreducibility conjecture. The proof relies on an application of a (seemingly new) sufficient condition for the existence of 
elements of degree $n$ in the support of functions on finite fields. 
This connection to irreducible polynomials is made via the least period of the discrete Fourier transform (DFT) of functions with values in finite fields. 
We exploit this relation and prove, in an elementary fashion, that a relevant function related to the DFT of characteristic elementary symmetric functions (which produce the coefficients of characteristic polynomials) 
has a sufficiently large least period (except for some genuine exceptions). This bears a sharp contrast to previous techniques in literature employed to tackle existence
of irreducible polynomials with prescribed coefficients.
\end{abstract}

 \maketitle

 \section{Introduction}
 
 Let $q$ be a power of a prime $p$, let $\F$ be the finite field with $q$ elements, and let $n \geq 2$.
 In 1992, Hansen-Mullen \cite{hansen-mullen} conjectured (in Conjecture B there; see Theorem \ref{thm: hansen-mullen} below) that, except for a few genuine exceptions, 
 there exist irreducible (and more strongly primitive; see Conjecture A) polynomials of degree $n$ over $\F$ with 
 any {\em one} of its coefficients prescribed to any value. Conjecture B (appearing as Theorem \ref{thm: hansen-mullen} below) was proven by Wan \cite{wan} in 1997 for $q> 19$ or $n \geq 36$,  
 with the remaining cases being computationally verified soon after in \cite{ham-mullen}.
In 2006, Cohen \cite{cohen 2006}, particularly building on some of the work of Fan-Han \cite{fan-han} on $p$-adic series, proved 
there exists a primitive polynomial of degree $n \geq 9$ over $\F$ with any one of its coefficients prescribed. 
The remaining cases of Conjecture A were settled by Cohen-Pre\v{s}ern in \cite{cohen-presern 2006, cohen-presern 2008}. 
Cohen \cite{cohen 2006} and Cohen-Pre\v{s}ern \cite{cohen-presern 2006, cohen-presern 2008} 
also gave theoretical explanations for the small cases of $q,n,$ missed out in Wan's original proof \cite{wan}. 
First for a polynomial $h(x) \in \F[x]$ and an integer $w$, we denote by $[x^w]h(x)$ the coefficient of $x^w$ in $h(x)$.

\begin{thm}\label{thm: hansen-mullen}
 Let $q$ be a power of a prime, let $c \in \F$, and let $n \geq 2$ and $w$ be integers with $1 \leq w \leq n$. If $w = n$, assume that $c \neq 0$. 
 If $(n,w,c) = (2,1,0)$, further assume $q$ is odd.
 Then there exists a monic irreducible polynomial 
 $P(x)$ of degree $n$ over $\F$ with $[x^{n-w}] P(x) = c$.
\end{thm}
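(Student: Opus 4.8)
The plan is to recast the problem in terms of characteristic elementary symmetric functions and then force a degree-$n$ element into a prescribed level set by showing that an associated discrete Fourier transform has large least period.

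\emph{Reduction.} A monic polynomial of degree $n$ over $\F$ is irreducible exactly when it is the minimal polynomial of some $\alpha \in \Fn$ of degree $n$ over $\F$, and for such $\alpha$ it coincides with the characteristic polynomial $\prod_{i=0}^{n-1}(x-\alpha^{q^i}) = \sum_{j=0}^{n}(-1)^j \sigma_j(\alpha)\,x^{n-j}$, where $\sigma_j(\alpha) = e_j(\alpha,\alpha^q,\dots,\alpha^{q^{n-1}})$ is the $j$-th elementary symmetric function of the Frobenius conjugates of $\alpha$. Since the coefficient of $x^{n-w}$ equals $(-1)^w\sigma_w(\alpha)$, the theorem is equivalent to showing that the level set $L = \{\alpha \in \Fn : \sigma_w(\alpha) = (-1)^w c\}$ contains an element of degree $n$ over $\F$, outside the stated exceptions. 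I would encode $L$ as the support of the $\F$-valued function $f(\alpha) = 1 - \big(\sigma_w(\alpha) - (-1)^w c\big)^{q-1}$ on $\Fn$, so that the goal becomes: $\supp f$ contains a degree-$n$ element.

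\emph{The period--support dictionary.} Fix a primitive element $\gamma$ of $\Fn$ and identify functions on $\Fn^{\ast}$ with sequences on $\Zqn$ via $k \mapsto \gamma^k$; the transform takes values in $\Fn$ and is invertible since $q^n-1$ is a unit mod $p$. Writing $N_d = (q^n-1)/(q^d-1)$ for a proper divisor $d \mid n$, one has $\gamma^k \in \mathbb{F}_{q^d}^{\ast}$ iff $N_d \mid k$, so $\gamma^k$ has degree $n$ precisely when $N_d \nmid k$ for every such $d$. The elementary seed is that a function supported inside a subfield $\mathbb{F}_{q^d}^{\ast} = \langle \gamma^{N_d}\rangle$ has a DFT that is $(q^d-1)$-periodic in the frequency variable, hence of least period dividing $q^d-1$. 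I would upgrade this into the sufficient condition advertised in the abstract: if the least period of the relevant transform exceeds what any proper subfield can contribute --- in particular if it fails to divide $q^d-1$ for every proper $d \mid n$, with an inclusion--exclusion argument handling the \emph{union} of subfields rather than a single one --- then the support cannot be confined to low-degree elements, so $\supp f$ must contain an element of degree $n$. Making this condition both clean and strong enough to also detect the specific target value $(-1)^w c$ is the conceptual core.

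\emph{Bounding the least period (the hard part).} It remains to verify that the transform genuinely has large least period, and here the characteristic elementary symmetric functions enter explicitly: $\sigma_w(\alpha) = \sum_{S}\alpha^{e_S}$, summed over $w$-subsets $S \subseteq \{0,1,\dots,n-1\}$ with $e_S = \sum_{i \in S} q^i$, so the frequencies carried by $\sigma_w$ are exactly the $\binom{n}{w}$ integers whose base-$q$ expansion has digits in $\{0,1\}$ with precisely $w$ ones. The least period of the value sequence $k \mapsto \sigma_w(\gamma^k)$ is then $(q^n-1)$ divided by $\gcd\big(\{e_S\}\cup\{q^n-1\}\big)$, and the defining $(q-1)$-st power and $c$-twist only enrich this frequency set further. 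I expect the resulting gcd, and hence least-period, estimate to be the main obstacle: it requires a careful, genuinely elementary analysis of these base-$q$ digit patterns carried out uniformly in $q$, $n$, $w$, showing the active frequencies share only a small factor with $q^n-1$ and thereby pushing the least period past every $q^d-1$. At the same time one must pin down exactly where the period truly collapses, recovering precisely the genuine exceptions --- $w=n$ with $c=0$ (the constant term of an irreducible polynomial cannot vanish) and $(n,w,c)=(2,1,0)$ in even characteristic (where $x^2+c$ is a square). Once the lower bound on the least period is secured, the dictionary of the previous step produces the required degree-$n$ element of $\supp f$, and hence the desired monic irreducible polynomial.
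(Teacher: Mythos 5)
Your outline tracks the paper's strategy closely: encode the level set $\{\alpha : (-1)^w\sigma_w(\alpha)=c\}$ as the support of $1-h^{q-1}$, pass to exponents of a primitive element, and force a degree-$n$ element into the support via a lower bound on the least period of the associated DFT. But there are two genuine gaps. The first is in your ``dictionary'': the condition you state --- that the least period $r$ fail to divide $q^d-1$ for every proper divisor $d$ of $n$ --- is only \emph{necessary}, not sufficient. A function supported entirely on the union $\bigcup_{d\mid n,\,d<n}\mathbb{F}_{q^d}^{\times}$ of proper subfields can have least period $\lcm\{q^d-1 : d\mid n,\ d<n\}$, which divides no single $q^d-1$ once $n$ has two distinct prime factors (the paper exhibits exactly this in Example 3.3). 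The correct sufficient condition is $r\nmid (q^n-1)/\Phi_n(q)=\lcm\{q^d-1 : d\mid n,\ d<n\}$, and the mechanism is not inclusion--exclusion over subfields but the identity $\gcd\{(q^n-1)/(q^d-1) : d\mid n,\ d<n\}=\Phi_n(q)$ combined with the exact formula: the least period of $\mathcal{F}_{\zeta}[f]$ equals $(q^n-1)/\gcd(q^n-1,\supp(f))$.

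The second, and more serious, gap is that the entire technical content of the theorem sits in the step you defer as ``the main obstacle'': proving that $\Delta_{w,c}=\delta_0-((-1)^w\delta_w-c\delta_0)^{\otimes(q-1)}$ has least period exceeding $(q^n-1)/\Phi_n(q)$. Your heuristic that the $(q-1)$-st power ``only enriches the frequency set'' cannot be assumed --- convolution powers can cancel, and the support must be controlled from both sides. The paper's argument (Lemma 5.1, the bulk of Section 5) rests on the observation that no carries occur when adding at most $q-1$ elements of $\Omega(w)$, so every element of $\supp(\Delta_{w,c})$ has $q$-digit sum $sw$ for a unique $1\le s\le q-1$; a putative period $r<q^n-1$ is then shifted by a carefully chosen multiple $st$ with $t\in\Omega(w)$ to produce a support element of digit sum exceeding $(q-1)w$, a contradiction. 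The case $c=0$, $w=n/2$ requires a separate argument using the $q$-symmetry (invariance under digit permutations) of $\delta_{n/2}^{\otimes(q-1)}$, and it is precisely there that the genuine exception $(n,w,c)=(2,1,0)$ with $q$ even, and the weaker bound $r\ge(q^n-1)/2$ for $q$ odd, emerge --- compensated by $\Phi_n(q)>q-1$. You also omit the reduction to $1\le w\le n/2$ via reciprocal polynomials and the elementary norm argument for $w=n$, without which the period bound would have to be established for all $w<n$. As it stands the proposal is a correct roadmap of the paper's route, but the decisive digit-sum estimate is not carried out.
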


The Hansen-Mullen conjectures have since been generalized 
to encompass results on the existence of irreducible and particularly primitive polynomials with {\em several} prescribed coefficients 
(see for instance \cite{garefalakis, George paper, George thesis, pollack, Ha} for general irreducibles and \cite{Cohen 2004, han, ren, Shparlinski} for primitives). 
In particular Ha \cite{Ha}, building on some of the work of Pollack \cite{pollack} and Bourgain \cite{Bourgain}, has recently proved that, for large enough $q,n$,  
there exists irreducibles of degree $n$ over $\F$ 
with roughly any $n/4$ coefficients prescribed to any value. This seems to be the current record on the number of {\em arbitrary} coefficients one may prescribe 
to any values in an irreducible polynomial of degree $n$.

The above are existential results obtained through asymptotic estimates. 
However there is also intensive research on the {\em exact} number of irreducible polynomials with some prescribed coefficients.
See for instance \cite{Carlitz, Fitzgerald-Yucas, KMRV, KPW} and references therein for some work in this area. See also \cite{TW} for primitives and $N$-free elements in special cases.

There are some differences of approach in tackling existence questions of either general irreducible or primitive polynomials with prescribed coefficients. 
For instance, when working on irreducibles, and following in the footsteps of Wan \cite{wan}, 
it has been common practice to exploit the $\F[x]$-analogue of Dirichlet's theorem for primes in arithmetic progressions; 
all this is done via Dirichlet characters on $\F[x]$, $L$-series, zeta functions, etc.
See for example \cite{George thesis}. Recently Pollack \cite{pollack} and Ha \cite{Ha}, building on some ideas of Bourgain \cite{Bourgain}, 
applied the circle method to prove the existence of irreducible polynomials with several prescribed coefficients. 
On the other hand, in the case of primitives, the problem is usually approached via $p$-adic rings or fields 
(to account for the inconvenience that Newton's identities ``break down'', in some sense, in fields of positive characteristic) 
together with Cohen's sieving lemma, Vinogradov's characteristic function, etc. (see for example \cite{fan-han, cohen 2006}).
However there is one common feature these methods share, namely, when bounding
the ``error'' terms comprised of character sums, the function field analogue of Riemann's hypothesis (Weil's bound) is used (perhaps without exception here). 
Nevertheless as a consequence of its $O(q^{n/2})$ nature it transpires 
a difficulty in extending the $n/2$ threshold for the number of coefficients one can prescribe in irreducible or particularly primitive polynomials of degree $n$.

As the reader can take from all this, there seems to be a preponderance of the analytic method to tackle the existence problem of irreducibles 
and primitives with several prescribed coefficients.
One then naturally wonders whether other view-points 
may be useful for tackling such problems. As Panario points out in \cite{Koc}, Chapter 6, Section 2.3, p.115, 

\say{{\em The long-term goal here is to provide existence and counting results for
irreducibles with any number of prescribed coefficients to any given values. This
goal is completely out of reach at this time. Incremental steps seem doable, but
it would be most interesting if new techniques were introduced to attack these
problems}}

In this work we take a different approach and give a new proof of the Hansen-Mullen irreducibility conjecture (or theorem), stated in Theorem \ref{thm: hansen-mullen}.
We attack the problem by studying the least period of certain functions related to the discrete Fourier transform (DFT) of characteristic elementary symmetric functions 
(which produce the coefficients of characteristic polynomials).
This bears a sharp contrast to previous techniques in literature.
The proof theoretically explains, in a unified way,
every case of the Hansen-Mullen conjecture. These include the small cases missed out in Wan's original proof \cite{wan},
computationally verified in \cite{ham-mullen}. 
However we should point out that, in contrast, our proof has the disadvantage of not yielding estimates for the number of irreducibles with a prescribed coefficient.
It merely asserts their existence. 
We wonder whether some of the techniques introduced here can be extended to tackle the existence question for several prescribed coefficients, 
but we for now leave this to the consideration of the interested reader.

The proof relies in an application of the sufficient condition in Lemma \ref{lem: factor of deg n}, which follows from that in (i) of the following lemma. 
First for a primitive element $\zeta$ of $\F$ and a function $f : \mathbb{Z}_{q-1} \to \F$, the DFT of $f$ based on $\zeta$ is the function $\mathcal{F}_{\zeta}[f] : \mathbb{Z}_{q-1} \to \F$ given by 
$$
\mathcal{F}_{\zeta}[f](m) = \sum_{j \in \mathbb{Z}_{q-1}} f(j) \zeta^{mj}, \hspace{1em} m \in \mathbb{Z}_{q-1}. 
$$
Here $\mathbb{Z}_{q-1} := \mathbb{Z}/(q-1) \mathbb{Z}$. 
The inverse DFT is given by $\mathcal{F}_\zeta^{-1}[f] = -\mathcal{F}_{\zeta^{-1}}[f]$.
For a function $g : \mathbb{Z}_{q-1} \to \F$, we say that $g$ has least period $r$ if $r$ is the smallest positive integer such that $g(m + \bar{r}) = g(m)$ for all $m \in \mathbb{Z}_{q-1}$.
Let $\Phi_n(x) \in \mathbb{Z}[x]$ be the $n$-th cyclotomic polynomial. For a function $F$ on a set $A$, let $\supp(F) := \{a \in A \ : \ F(a) \neq 0\}$ be the support of $F$.

\begin{lem}\label{lem: revision}
 Let $q$ be a power of a prime, let $n \geq 2$, let $\zeta$ be a primitive element of $\Fn$, let $F : \Fn \to \Fn$, 
 let $f : \mathbb{Z}_{q^n-1} \to \Fn$ be defined by $f(k) = F(\zeta^k)$, and let $r$ be the least period of $\mathcal{F}_\zeta[f]$ 
 (which is the same as the least period of $\mathcal{F}_{\zeta}^{-1}[f]$). Then we have the following results.
 \\
 (i) If $r \nmid (q^n-1)/\Phi_n(q)$, 
 then $\supp(F)$ contains an element of degree $n$ over $\F$;\\
 (ii) If $\supp(F)$ contains an element of degree $n$ over $\F$, then $r \nmid (q^d-1)$ for every positive divisor $d$ of $n$ with $d < n$;\\
 (iii) If $\supp(F)$ contains a primitive element of $\Fn$, then $r = q^n-1$.
 \end{lem}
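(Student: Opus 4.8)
The plan is to reduce all three statements to a single duality principle: writing $N := q^n-1$, the least period $r$ of $\mathcal{F}_\zeta[f]$ equals the order of the cyclic subgroup $\langle \supp(f)\rangle \le \Zqn$ generated by the support of $f$. To prove this I would fix $r$ and observe that the map $m \mapsto \mathcal{F}_\zeta[f](m+r) - \mathcal{F}_\zeta[f](m)$ is precisely the DFT of the function $k \mapsto f(k)(\zeta^{kr}-1)$. Since $\mathcal{F}_\zeta$ is a bijection (its inverse being $-\mathcal{F}_{\zeta^{-1}}$), the transform $\mathcal{F}_\zeta[f]$ has period $r$ if and only if $f(k)(\zeta^{kr}-1)=0$ for all $k$, i.e. $N \mid kr$ for every $k \in \supp(f)$. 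An elementary computation then identifies the least such $r$ as $N/\gcd(\supp(f)\cup\{N\})$, which is exactly $|\langle\supp(f)\rangle|$. Running the same argument with the (also primitive) element $\zeta^{-1}$ delivers the parenthetical claim that $\mathcal{F}_\zeta^{-1}[f]$ has the same least period.

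Next I would record the subfield--subgroup dictionary: under $k \mapsto \zeta^k$, the group $\mathbb{F}_{q^d}^\times$ is the image of the unique subgroup of $\Zqn$ of order $q^d-1$, namely $M_d\,\Zqn$ with $M_d := N/(q^d-1)$, so that $\zeta^k \in \mathbb{F}_{q^d}$ iff $M_d \mid k$. Parts (iii) and (ii) then follow immediately from the duality. For (iii), a primitive element of $\Fn$ is $\zeta^{k_0}$ with $\gcd(k_0,N)=1$, whence $\langle\supp(f)\rangle \supseteq \langle k_0\rangle = \Zqn$ and $r = N = q^n-1$. For (ii), an element of degree $n$ is some $\zeta^{k_0}\in\supp(F)$ with $k_0 \notin M_d\Zqn$ for every divisor $d\mid n$, $d<n$; since $k_0 \in \supp(f)\subseteq\langle\supp(f)\rangle$, the order-$r$ subgroup $\langle\supp(f)\rangle$ is not contained in the order-$(q^d-1)$ subgroup $M_d\Zqn$, and therefore $r \nmid q^d-1$.

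For (i) I would argue contrapositively. If $\supp(F)$ contains no element of degree $n$, then $\supp(f) \subseteq \bigcup_{d\mid n,\,d<n} M_d\Zqn$, so $\langle\supp(f)\rangle$ lies inside the subgroup generated by this union, which is $\gcd_{d}(M_d)\,\Zqn$. Factoring $M_d = \prod_{e\mid n,\,e\nmid d}\Phi_e(q)$ and noting that $n \nmid d$ for every proper divisor $d$ of $n$, we get $\Phi_n(q) \mid M_d$ for all such $d$, hence $\Phi_n(q) \mid \gcd_d(M_d)$. Consequently $r = |\langle\supp(f)\rangle|$ divides $N/\gcd_d(M_d)$, which in turn divides $N/\Phi_n(q) = (q^n-1)/\Phi_n(q)$; this is exactly the contrapositive of (i).

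The conceptual crux is the duality of the first paragraph, which converts the analytic notion of least period into the combinatorial order of $\langle\supp(f)\rangle$; fortunately the bijectivity of the DFT makes this step clean and essentially characteristic-free. I expect the main obstacle to lie in part (i): because a union of proper-subfield subgroups is not itself a subgroup, one cannot place $\supp(f)$ inside a single $M_d\Zqn$, and must instead control the subgroup it generates through the gcd of the $M_d$. Pinning the resulting threshold to exactly $(q^n-1)/\Phi_n(q)$ rests on the divisibility $\Phi_n(q)\mid M_d$; notably one does \emph{not} need the sharper and more delicate identity $\gcd_d(M_d)=\Phi_n(q)$, since mere divisibility by $\Phi_n(q)$ already yields the stated bound.
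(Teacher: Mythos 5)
Your proposal is correct and follows essentially the same route as the paper: both rest on the identity (Proposition~\ref{prop: period of DFT} there) that the least period of $\mathcal{F}_\zeta[f]$ equals $(q^n-1)/\gcd(q^n-1,\supp(f))$ --- your $|\langle\supp(f)\rangle|$ --- combined with the subfield--subgroup dictionary and the divisibility $\Phi_n(q)\mid (q^n-1)/(q^d-1)$. The only real difference is how you derive that period formula: you apply bijectivity of the DFT to the difference function $m\mapsto \mathcal{F}_\zeta[f](m+r)-\mathcal{F}_\zeta[f](m)$, recognized as the DFT of $k\mapsto f(k)(\zeta^{kr}-1)$, whereas the paper runs an explicit geometric-series computation through the inversion formula; your shortcut is clean and equivalent.
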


In particular (i) implies the existence of an irreducible factor of degree $n$ for any polynomial 
$h(x) \in \F[x]$ satisfying a constraint on the least period as follows. 
Here $\Fn^\times$ and $L^\times$ denote the set of all invertible elements in $\Fn$ and $L$ respectively. 

  \begin{lem}\label{lem: factor of deg n}
  Let $q$ be a power of a prime, let $n \geq 2$, let $h(x) \in \F[x]$, and let $L$ be any subfield of $\Fn$ containing the image $h(\Fn^\times)$.
  Define the polynomial
  $$
  S(x) = \left(1 - h(x)^{\#L^\times} \right) \bmod\left( x^{q^n-1} - 1\right) \in \F[x].
  $$
  Write $S(x) = \sum_{i=0}^{q^n-2} s_i x^i$ for some coefficients $s_i \in \F$. 
  If the cyclic sequence $(s_i)_{i=0}^{q^n-2}$ has least period $r$ satisfying $r \nmid (q^n-1)/\Phi_n(q)$, then 
  $h(x)$ has an irreducible factor of degree $n$ over $\F$.
  \end{lem}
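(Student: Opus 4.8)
The plan is to realize the irreducible factors of degree $n$ of $h$ as degree-$n$ elements in the support of a suitable function $F$ on $\Fn$, and then to identify the cyclic sequence $(s_i)$ with an inverse DFT so that part (i) of Lemma \ref{lem: revision} applies verbatim. First I would define $F : \Fn \to \Fn$ by $F(y) = 1 - h(y)^{\#L^\times}$ and show that its support detects exactly the roots of $h$ among the nonzero elements. Indeed, for $y \in \Fn^\times$ we have $h(y) \in L$ by hypothesis; if $h(y) \neq 0$ then $h(y) \in L^\times$ and Lagrange's theorem (Fermat's little theorem in $L$) gives $h(y)^{\#L^\times} = 1$, so $F(y) = 0$, whereas if $h(y) = 0$ then $F(y) = 1 \neq 0$. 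Hence $\supp(F) \cap \Fn^\times = \{y \in \Fn^\times : h(y) = 0\}$.

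Since $n \geq 2$, every element of degree $n$ over $\F$ is nonzero, and an element $y \in \Fn$ is a root of $h$ of degree $n$ over $\F$ precisely when its minimal polynomial, which is irreducible of degree $n$, divides $h$. Therefore $\supp(F)$ contains an element of degree $n$ over $\F$ if and only if $h$ has an irreducible factor of degree $n$ over $\F$; this reduces the lemma to producing a degree-$n$ element in $\supp(F)$.

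The crux is to link the given coefficients $s_i$ to the DFT of $f(k) := F(\zeta^k)$. Because $S(x) \equiv 1 - h(x)^{\#L^\times} \pmod{x^{q^n-1}-1}$ and $\zeta^m$ is a $(q^n-1)$-th root of unity, reduction modulo $x^{q^n-1}-1$ does not change the value at $\zeta^m$; thus $S(\zeta^m) = 1 - h(\zeta^m)^{\#L^\times} = F(\zeta^m) = f(m)$ for every $m \in \mathbb{Z}_{q^n-1}$. Writing $S(x) = \sum_{i=0}^{q^n-2} s_i x^i$ and viewing $s : \mathbb{Z}_{q^n-1} \to \F \subseteq \Fn$, $i \mapsto s_i$, this reads $f(m) = \sum_{i} s_i \zeta^{mi} = \mathcal{F}_\zeta[s](m)$, that is, $f = \mathcal{F}_\zeta[s]$, and so $s = \mathcal{F}_\zeta^{-1}[f]$ by inversion. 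Consequently the least period of the cyclic sequence $(s_i)$ equals the least period of $\mathcal{F}_\zeta^{-1}[f]$, which by Lemma \ref{lem: revision} equals the least period $r$ of $\mathcal{F}_\zeta[f]$; in particular the integer $r$ in the hypothesis is the very $r$ of Lemma \ref{lem: revision}.

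Finally, assuming $r \nmid (q^n-1)/\Phi_n(q)$, part (i) of Lemma \ref{lem: revision} yields an element of degree $n$ over $\F$ in $\supp(F)$, which by the reduction above forces an irreducible factor of degree $n$ of $h$, completing the argument. I expect the only genuinely delicate point to be the period identification: one must be sure that reducing $1 - h(x)^{\#L^\times}$ modulo $x^{q^n-1}-1$ is exactly what turns the list of coefficients into the inverse DFT of $f$, and that \emph{least period of $(s_i)$} and \emph{least period of $\mathcal{F}_\zeta[f]$} name the same integer $r$; both hinge on the evaluation identity $S(\zeta^m) = f(m)$ together with the stated invariance of the least period under the DFT and its inverse. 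Everything else is the routine Fermat-in-$L$ computation and the standard correspondence between roots of degree $n$ and irreducible factors of degree $n$.
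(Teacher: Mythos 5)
Your proposal is correct and follows essentially the same route as the paper: identify $S(\zeta^m)=1-h(\zeta^m)^{\#L^\times}$ as the indicator of roots of $h$ on $\Fn^\times$, recognize the coefficient sequence $(s_i)$ as the (inverse) DFT partner of that evaluation function, and invoke part (i) of Lemma \ref{lem: revision}. Your explicit justification that the least period of $(s_i)$ is the $r$ of Lemma \ref{lem: revision} (via $s=\mathcal{F}_\zeta^{-1}[f]$ and the equality of least periods of $\mathcal{F}_\zeta[f]$ and $\mathcal{F}_\zeta^{-1}[f]$) is a point the paper leaves largely implicit, and is handled correctly.
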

 
Note Lemma \ref{lem: factor of deg n} immediately yields the following sufficient condition for a polynomial to be irreducible. 

\begin{prop}\label{prop: irreducible polynomial condition}
  With the notations of Lemma \ref{lem: factor of deg n}, if $h(x) \in \F[x]$ is of degree $n \geq 2$, and the cyclic sequence $(s_i)_{i=0}^{q^n-2}$ of the coefficients of $S(x)$
  has least period $r$ satisfying $r \nmid (q^n-1)/\Phi_n(q)$, then $h(x)$ is irreducible. 
 \end{prop}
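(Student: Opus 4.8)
The plan is to invoke Lemma \ref{lem: factor of deg n} directly and then extract irreducibility from a comparison of degrees. I would first observe that the hypotheses here are \emph{exactly} those of Lemma \ref{lem: factor of deg n}: we are given $h(x) \in \F[x]$, the polynomial $S(x)$ is formed in the identical manner from a subfield $L$ containing the image $h(\Fn^\times)$, and the least period $r$ of the cyclic sequence $(s_i)_{i=0}^{q^n-2}$ satisfies the same divisibility constraint $r \nmid (q^n-1)/\Phi_n(q)$. The only additional assumption is that $\deg h = n$. Thus Lemma \ref{lem: factor of deg n} applies verbatim and produces an irreducible factor $P(x) \in \F[x]$ of degree $n$ dividing $h(x)$.

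The remaining step is to upgrade \say{$h$ has an irreducible factor of degree $n$} to \say{$h$ is irreducible}, and here the extra hypothesis $\deg h = n$ does all the work. Writing $h(x) = P(x)\, g(x)$ with $P$ irreducible of degree $n$, a comparison of degrees gives $\deg g = \deg h - \deg P = n - n = 0$, so $g$ is a nonzero constant $c \in \F$. Hence $h = c\,P$, and since a nonzero scalar multiple of an irreducible polynomial is again irreducible, $h(x)$ is irreducible over $\F$. I expect no genuine obstacle here: the entire mathematical content is carried by Lemma \ref{lem: factor of deg n}, and the passage from a degree-$n$ factor to full irreducibility is just the elementary degree bookkeeping described above.
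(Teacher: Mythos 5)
Your proposal is correct and matches the paper's intent exactly: the paper offers no separate proof, stating only that Proposition \ref{prop: irreducible polynomial condition} follows immediately from Lemma \ref{lem: factor of deg n}, and the implicit argument is precisely your degree comparison showing the degree-$n$ irreducible factor exhausts all of $h$ up to a nonzero constant.
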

 
 To give the reader a flavor for the essence of our proof as an application of Lemma \ref{lem: factor of deg n}, we give the following small example. 
 
 \begin{eg}
  Let $q = 2$, let $n = 4$, and let 
  \begin{align*}
   h(x) &= \sum_{0 \leq i_1 < i_2 \leq 3} x^{2^{i_1} + 2^{i_2} }\\
   &= x^{12} + x^{10} + x^9 + x^6 + x^5 + x^3 \in \mathbb{F}_2[x].
  \end{align*}
 Note that $h(\mathbb{F}_{2^4}) \subseteq \mathbb{F}_2$. 
 In fact, for any $\xi \in \mathbb{F}_{2^4}$, 
 $h(\xi)$ is the coefficient of $x^2$ in the characteristic polynomial of degree $4$ over $\mathbb{F}_2$ with root $\xi$. 
 We may take $L = \mathbb{F}_2$ in Lemma \ref{lem: factor of deg n}; hence $\#L^\times = 1$. Thus
 \begin{align*}
  S(x) &:= \left(1 + h(x)^{\#L^\times}\right) \bmod\left(x^{2^4-1} + 1 \right) = h(x) + 1\\
  &= x^{12} + x^{10} + x^9 + x^6 + x^5 + x^3 + 1 \in \mathbb{F}_2[x].
 \end{align*}
 The cyclic sequence of coefficients $\mathbf{s} = s_0, s_1, \ldots, s_{2^4-2}$ of $S(x) = \sum_{i=0}^{2^4-2}s_i x^i$ is given by
 $$
 \mathbf{s} = 1, 0, 0, 1, 0, 1, 1, 0, 0, 1, 1, 0, 1, 0, 0.
 $$
 One can easily check that the least period $r$ of $\mathbf{s}$ is $r = 2^4-1$, the maximum possible. 
 Because $2^4-1 > (2^4-1)/\Phi_4(2)$, 
 Lemma \ref{lem: factor of deg n} implies that $h(x)$ has an irreducible factor $P(x)$ of degree $4$ over $\mathbb{F}_2$. 
Any root $\xi$ of $P(x)$ must satisfy $h(\xi) = 0$. This is the coefficient of $x^2$ in $P(x)$. 
Hence there exists an irreducible polynomial of degree $4$ over $\mathbb{F}_2$ with its coefficient of $x^2$ being zero. 
Indeed, $x^4 + x + 1$ is one such irreducible polynomial. 
\end{eg}

The rest of this work goes as follows. 
In Section~\ref{section: dft} we review some preliminary concepts regarding the DFT on finite fields, convolution, 
least period of functions on cyclic groups, and cyclotomic polynomials.
In Section~\ref{section: connection between DFT and irreducibles} we study the connection, between the least period of the DFT of functions, and irreducible polynomials.
In particular we explicitly describe in Proposition \ref{prop: period of DFT} the least period of the DFT of functions, as well as prove Lemmas \ref{lem: revision} and \ref{lem: factor of deg n}.
In Section~\ref{section: delta functions} we introduce the characteristic delta functions as the 
DFTs of characteristic elementary symmetric functions. 
We then apply Lemma \ref{lem: factor of deg n} to give a sufficient condition, in Lemma \ref{lem: delta function},
for the existence of an irreducible polynomial with any one of its coefficients prescribed. This is given in terms of the least period of a certain function $\Delta_{w,c}$, 
closely related to the delta functions.
We also review some basic results on $q$-symmetric functions and their convolutions; this will be needed in Section~\ref{section: proof of Hansen-Mullen}.
Finally in Section~\ref{section: proof of Hansen-Mullen} we prove that the $\Delta_{w,c}$ functions have sufficiently large period. The proof of 
Theorem \ref{thm: hansen-mullen} then immediately follows from this.

 \section{Preliminaries}\label{section: dft}
 We recall some preliminary concepts regarding the DFT for finite fields, convolution, least period of functions on cyclic groups,
 and cyclotomic polynomials.
 
 Let $q$ be a power of a prime $p$, let $N \in \mathbb{N}$ such that $N \mid q-1$, and let $\zeta_N$ be a primitive $N$-th root of unity in $\F^*$ 
 (the condition on $N$ guarantees the existence of $\zeta_N$). We shall use the common notation $\mathbb{Z}_N := \mathbb{Z}/ N \mathbb{Z}$.
 Now the DFT based on $\zeta_N$, on the $\F$-vector space of functions $f : \mathbb{Z}_N \to \F$, is 
 defined by
 $$
 \mathcal{F}_{\zeta_N}[f](i) = \sum_{j \in \mathbb{Z}_N} f(j) \zeta_N^{ij}, \hspace{1em} i \in \mathbb{Z}_N.
 $$
 Note $\mathcal{F}_{\zeta_N}$ is a bijective linear operator with 
inverse given by $\mathcal{F}^{-1}_{\zeta_N} = N^{-1} \mathcal{F}_{\zeta_N^{-1}}$. 

For $f,g : \mathbb{Z}_N \to \F$, the convolution of $f,g$ is the function $f \otimes g : \mathbb{Z}_N \to \F$ given by
$$
(f\otimes g)(i) = \sum_{\substack{j + k = i \\ j,k \in \mathbb{Z}_N} } f(j)g(k). 
$$
Inductively, $f_1 \otimes f_2 \otimes \cdots \otimes f_k = f_1 \otimes (f_2 \otimes \cdots \otimes f_k)$ and so
$$
(f_1 \otimes \cdots \otimes f_k)(i) = \sum_{\substack{j_1 + \cdots + j_k = i \\ j_1, \ldots, j_k \in \mathbb{Z}_N}} f_1(j_1) \cdots f_k(j_k).
$$
For $m \in \mathbb{N}$, we let $f^{\otimes m}$ denote the $m$-th convolution power of $f$, that is, the convolution of $f$ with itself, $m$ times. 
The DFT and convolution are related by the fact that 
$$
\prod_{i=1}^k\mathcal{F}_{\zeta_N}[f_i] = \mathcal{F}_{\zeta_N}\left[ \bigotimes_{i=1}^k f_i\right]. 
$$
Since $f, \mathcal{F}_{\zeta_N}[f]$, have values in $\F$ by definition, it follows from the relation above that $f^{\otimes q} = f$.
Convolution is associative, commutative and distributive with identity $\delta_0 : \mathbb{Z}_N \to \{0, 1\} \subseteq \mathbb{F}_p$, the Kronecker delta function 
defined by $\delta_0(i) = 1$ if $i = 0$ and $\delta_0(i) = 0$ otherwise.
We set $f^{\otimes 0 } = \delta_0$.

Next we recall the concepts of a period and least period of a function $f:\mathbb{Z}_N \to \F$. 
For $r \in \mathbb{N}$, we say that $f$ is {\em $r$-periodic} if $f(i ) = f(i + \overline{r})$ for all $i \in \mathbb{Z}_N$. 
Clearly $f$ is $r$-periodic if and only if it is $\gcd(r, N)$-periodic.
The smallest such positive integer $r$ is called the {\em least period} of $f$. Note the least period $r$ satisfies $r \mid R$ whenever $f$ is $R$-periodic.
If the least period of $f$ is $N$, we say that $f$ has {\em maximum least period}.

There are various operations on cyclic functions which preserve the least period. 
For instance the {\em $k$-shift} function $f_k(i) := f(i + k)$ of $f$ has the same least period as $f$. 
The {\em reversal} function $f^*(i) := f(-(1 + i))$ of $f$ also has the same least period. 
Let $\sigma$ be a permutation of $\F$. The function $f^{\sigma}(i) := \sigma(f(i))$ keeps the 
least period of $f$ as well. 

Next we recall a few elementary facts about cyclotomic polynomials. 
For $n \in \mathbb{N}$, the $n$-th cyclotomic polynomial $\Phi_n(x) \in \mathbb{Z}[x]$
 is defined by 
$$
 \Phi_n(x) = \prod_{k \in \left(\mathbb{Z}/n \mathbb{Z} \right)^\times}\left( x - \zeta_n^k\right),
$$
 where $\zeta_n \in \mathbb{C}$ is a primitive $n$-th root of unity and $\left(\mathbb{Z}/n \mathbb{Z} \right)^\times$ denotes the unit group modulo $n$.
 Since $x^n-1 = \prod_{d \mid n}\Phi_d(x)$, the M\"{o}bius inversion formula gives $\Phi_n(x) = \prod_{d \mid n}(x^{n/d} - 1)^{\mu(d)}$, where $\mu$ is the M\"{o}bius function.

For any divisor $m$ of $n$, with $0 < m < n$, we note that 
$$
\dfrac{x^n-1}{x^m - 1} = \dfrac{\prod_{d \mid n} \Phi_d(x)}{\prod_{d \mid m} \Phi_d(x)} = \prod_{\substack{d \mid n\\ d \nmid m}}\Phi_d(x).
$$ 
Hence 
\begin{equation}\label{eqn: cyclotomic is common divisor}
\Phi_n(x) \mid \dfrac{x^n-1}{x^m - 1} \in \mathbb{Z}[x].
\end{equation}
In fact, one can show that for $n \geq 2$,
$$
\Phi_n(q) = \gcd \left\{ \dfrac{q^n-1}{q^d-1} \ : \ 1\leq d\mid n, \ d < n\right\}
$$
and so 
$$
\dfrac{q^n-1}{\Phi_n(q)} = \lcm\left\{q^d - 1 \ : \ 1\leq d\mid n, \ d < n \right\}.
$$
Note also that
\begin{align} \label{eqn: cyclotomic inequality}
 \Phi_n(q) &= \left| \Phi_n(q)\right| = \prod_{k \in (\mathbb{Z}/n\mathbb{Z})^{\times}}\left|q - \zeta_n^k\right| \nonumber \\
 &> q-1
\end{align}
for $n \geq 2$,
since $|q-\zeta_n^k | > q-1$ for any primitive $n$-th root $\zeta_n^k \in \mathbb{C}$, whenever $n \geq 2$ 
(as can be seen geometrically by looking at the complex plane) \footnote{The elementary facts in (\ref{eqn: cyclotomic is common divisor}) and (\ref{eqn: cyclotomic inequality}) 
have some historical significance. For instance, these make an appearance in Witt's classical
proof of Wedderburn's theorem that every finite division ring is a field (see Chapter 5 in \cite{Aigner} for example).}.

\section{Least period of the DFT and connection to irreducible polynomials}\label{section: connection between DFT and irreducibles}
 
 In this section we study a connection between the least period of the DFT of a function and irreducible polynomials. 
 We start off by giving an explicit formula in Proposition \ref{prop: period of DFT} for the least period of the DFT of a function 
 $f : \mathbb{Z}_N \to \F$ in terms of the values in its support. 
Then we prove Lemmas \ref{lem: revision} and \ref{lem: factor of deg n}.

 First we may identify, in the usual way, elements of $\ZN = \mathbb{Z} / N \mathbb{Z}$ with their canonical representatives in $\mathbb{Z}$ and vice versa.
 In particular this endows $\ZN$ with the natural ordering in $\mathbb{Z}$.   
 We may also sometimes abuse notation and write $a \mid \bar{b}$ for $a \in \mathbb{Z}$ and $\bar{b} \in \ZN $ 
 to state that $a$ divides the canonical representative of $\bar{b}$, and write $a \nmid \bar{b}$ to state the opposite. 
 For an integer $k$ and a non-empty set $A = \{a_1, \ldots, a_s\}$, we write $\gcd(k, A) := \gcd(k, a_1, \ldots, a_s)$.

\begin{prop}\label{prop: period of DFT}
 Let $q$ be a power of a prime, let $N \mid q-1$, let $f : \mathbb{Z}_N \to \F$ and let $\zeta_N$ be a primitive $N$-th root of unity in $\F^*$. 
 The least period of $\mathcal{F}_{\zeta_N}[f]$ and $\mathcal{F}^{-1}_{\zeta_N}[f]$ is given by $N/\gcd(N, \supp(f))$.
\end{prop}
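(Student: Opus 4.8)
The plan is to translate the periodicity of $\mathcal{F}_{\zeta_N}[f]$ into a divisibility condition on $\supp(f)$, using that the DFT is injective. Write $\hat f := \mathcal{F}_{\zeta_N}[f]$. For a fixed $r \in \mathbb{N}$ I would compute
$$
\hat f(i + \bar r) = \sum_{j \in \mathbb{Z}_N} f(j)\, \zeta_N^{(i+r)j} = \sum_{j \in \mathbb{Z}_N} \big(f(j)\zeta_N^{rj}\big)\zeta_N^{ij} = \mathcal{F}_{\zeta_N}[g](i),
$$
where $g(j) := f(j)\zeta_N^{rj}$ is the \emph{modulation} of $f$. Thus $\hat f$ is $r$-periodic if and only if $\mathcal{F}_{\zeta_N}[g] = \mathcal{F}_{\zeta_N}[f]$, and since $\mathcal{F}_{\zeta_N}$ is a bijective linear operator this happens if and only if $g = f$, i.e. $f(j)\big(\zeta_N^{rj}-1\big)=0$ for every $j \in \mathbb{Z}_N$. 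Equivalently $\zeta_N^{rj}=1$ for every $j \in \supp(f)$, and since $\zeta_N$ has multiplicative order $N$ this reads: $\hat f$ is $r$-periodic if and only if $N \mid rj$ for all $j \in \supp(f)$.

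With this reformulation, set $d := \gcd(N, \supp(f))$ and $r_0 := N/d$; I claim $r_0$ is the least period. First, $r_0$ is a period: since $d \mid j$ for every $j \in \supp(f)$, we have $r_0 j = N\,(j/d)$ with $j/d \in \mathbb{Z}$, so $N \mid r_0 j$. Next I would show $r_0$ divides every period $r$, which is the step requiring the arithmetic to be handled with care. I would dispatch it with B\'ezout's identity: writing $d = \alpha N + \sum_{j \in \supp(f)} \beta_j\, j$ for suitable integers $\alpha,\beta_j$, and using that a period $r$ satisfies $N \mid rN$ and $N \mid rj$ for all $j \in \supp(f)$, one gets $N \mid rd$, hence $r_0 = N/d \mid r$. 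So the positive periods are exactly the multiples of $r_0$, and $r_0$ is the least period. (If $f \equiv 0$ then $\hat f \equiv 0$ has least period $1$, consistent with the convention $\gcd(N,\emptyset)=N$.)

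Finally I would address the inverse DFT. Since $\mathcal{F}^{-1}_{\zeta_N} = N^{-1}\mathcal{F}_{\zeta_N^{-1}}$ with $N$ invertible in $\F$ (as $N \mid q-1$ forces $\gcd(N,p)=1$), and since $\zeta_N^{-1}$ is again a primitive $N$-th root of unity, the same argument applied to $\zeta_N^{-1}$ shows $\mathcal{F}_{\zeta_N^{-1}}[f]$ has least period $N/\gcd(N,\supp(f))$; multiplying by the nonzero scalar $N^{-1}$ leaves the least period unchanged, giving the claim for $\mathcal{F}^{-1}_{\zeta_N}[f]$ as well.

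I expect the only real subtlety to lie in the minimality argument, namely showing $r_0$ divides \emph{every} period rather than merely that it is \emph{a} period. The B\'ezout trick makes this transparent and sidesteps the prime-by-prime comparison of $N/\gcd(N,\supp(f))$ with $\lcm_{j\in\supp(f)}\big(N/\gcd(N,j)\big)$, which is the alternative route one could take.
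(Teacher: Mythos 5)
Your proposal is correct, and it reaches the statement by a route that differs from the paper's in both halves of the argument. For the characterization of periods, the paper does not use the modulation--shift duality you invoke; instead it proves the two inequalities separately: it checks by direct computation that $d := N/\gcd(N,\supp(f))$ is a period (so the least period $r$ satisfies $r\le d$), and then, writing $f=\mathcal{F}_{\zeta_N}^{-1}[\,\widehat f\,]$ and splitting the inversion sum into $N/r$ blocks of length $r$, it uses a vanishing geometric series to show that $f(i)=0$ whenever $(N/r)\nmid i$, whence $d\le r$ by the minimality of $d$. Your modulation identity $\widehat f(\,\cdot+\bar r)=\mathcal{F}_{\zeta_N}[\zeta_N^{r\cdot}f]$ combined with injectivity of the DFT packages both directions into the single equivalence ``$\widehat f$ is $r$-periodic if and only if $N\mid rj$ for all $j\in\supp(f)$,'' which is cleaner and makes the full set of periods visible at once rather than only pinning down the least one. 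For the minimality step, your B\'ezout argument (from $N\mid rN$ and $N\mid rj$ for all $j\in\supp(f)$ deduce $N\mid r\gcd(N,\supp(f))$, hence $N/d\mid r$) directly proves that $d$ divides every period, whereas the paper extracts only $d\le r$ from the observation that $d$ is the smallest positive divisor of $N$ with $N/d$ dividing all of $\supp(f)$; combined with the standing fact that the least period divides every period, the two formulations are equivalent, but yours is self-contained. The treatment of $\mathcal{F}^{-1}_{\zeta_N}$ via $\mathcal{F}^{-1}_{\zeta_N}=N^{-1}\mathcal{F}_{\zeta_N^{-1}}$ is identical in both, and your explicit handling of $f\equiv 0$ is a small point the paper leaves implicit.
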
 

\begin{proof}
Note that $d := N/\gcd(N, \supp(f))$ is the smallest positive divisor of $N$ with the property that $N/d$ divides every element in $\supp(f)$. 
For the sake of brevity write $\widehat{f} = \mathcal{F}_{\zeta_N}[f]$. 
Now for $i \in \mathbb{Z}_N$ note that
\begin{align*}
 \widehat{f}(i + d) &= \sum_{j \in \mathbb{Z}_N} f(j) \zeta_N^{(i+d)j} = \sum_{k=0}^{d-1} f\left(\frac{N}{d}k\right) \zeta_N^{(i+d) \frac{N}{d}k} = \sum_{k=0}^{d-1} f\left(\frac{N}{d}k\right) \zeta_N^{i\frac{N}{d}k}\\
 &= \widehat{f}(i). 
\end{align*}
Thus if $r$ is the least period of $\widehat{f}$, necessarily $r \leq d$. 

Since $f = \mathcal{F}_{\zeta_N}^{-1}[ \ \widehat{f} \ ]$, then
 $$
 f(i) = N^{-1} \sum_{j \in \mathbb{Z}_N} \widehat{f}(j) \zeta_N^{-ij}, \hspace{2em} i \in \mathbb{Z}_N. 
 $$
 Hence for $i \in \mathbb{Z}_N$ we have
 \begin{align*}
 Nf(i) &= \sum_{j \in \ZN} \widehat{f}(j) \zeta_N^{-ij} =  \sum_{j=0}^{r-1}\widehat{f}(j) \zeta_N^{-ij} + \sum_{j=r}^{2r-1} \widehat{f}(j) \zeta_N^{-ij} + \cdots + \sum_{j = \left(\frac{N}{r}-1\right)r}^{N-1} \widehat{f}(j)\zeta_N^{-ij} \\
 &= \sum_{j=0}^{r-1} \widehat{f}(j) \zeta_N^{-ij} + \sum_{j=0}^{r-1} \widehat{f}(j+r)\zeta_N^{-i(j + r)} + \cdots + \sum_{j=0}^{r-1} \widehat{f}\left(j + \left(\frac{N}{r}-1\right)r \right) \zeta_N^{-i \left(j + \left(\frac{N}{r}-1\right)r \right)}\\
 &= \sum_{j=0}^{r-1} \widehat{f}(j)\zeta_N^{-ij} + \zeta_N^{-ir}\sum_{j=0}^{r-1} \widehat{f}(j)\zeta_N^{-ij} + \cdots + \zeta_N^{-i\left(\frac{N}{r}-1\right)r}\sum_{j=0}^{r-1} \widehat{f}(j)\zeta_N^{-ij}\\
 &= \sum_{k=0}^{\frac{N}{r} - 1} \zeta_N^{-irk } \sum_{j=0}^{r-1}\widehat{f}(j) \zeta_N^{-ij}. 
 \end{align*}
 If $\frac{N}{r} \nmid i$, then $\zeta_N^{-ir} \neq 1$ and $\sum_{k=0}^{\frac{N}{r} - 1} \zeta_N^{-irk } = \frac{\zeta_N^{-iN} - 1}{ \zeta_N^{-ir} - 1} = 0$. 
 It follows that $f(i) = 0$ whenever $\frac{N}{r} \nmid i$. Equivalently, if $f(i) \neq 0$, then $\frac{N}{r} \mid i$. 
 Now the minimality of $d$ implies that $d \leq r$. But $r \leq d$ (see above) now yields $r = d$. 
 
 With regards to the least period of $\mathcal{F}_{\zeta_N}^{-1}[f]$, 
 we know that $\mathcal{F}_{\zeta_N}^{-1}[f] = N^{-1} \mathcal{F}_{\zeta_N^{-1}}[f]$. Since $\zeta_N^{-1}$
 is a primitive $N$-th root of unity in $\F^*$ as well, the previous arguments similarly imply that $\mathcal{F}_{\zeta_N^{-1}}[f]$ has the least period $d$. 
 Then so does the function $N^{-1} \mathcal{F}_{\zeta_N^{-1}}[f]$,
 a non-zero scalar multiple of $\mathcal{F}_{\zeta_N^{-1}}[f]$.
 \end{proof}

In particular, if $\zeta$ is primitive in $\F$ and $F(x) = \sum_{i \in I} a_i x^i \in \F[x]$ for some subset $I \subseteq [0, q-2]$ of integers with each 
$a_i \neq 0$, $i \in I$, then the least period of the $(q-1)$-periodic sequence
$(F(\zeta^i))_{i \geq 0}$ is given by $(q-1)/\gcd(q-1, I)$. We now prove Lemma \ref{lem: revision}. 

  \begin{proof}[{\bf Proof of Lemma \ref{lem: revision}}]
  (i) On the contrary, suppose that $\supp(F)$ contains no element of degree $n$ over $\F$. Then for each $m \in \supp(f)$ there exists a proper divisor $d$ of $n$ with $(q^n - 1)/(q^d-1) \mid m$.
   Since $\Phi_n(q) \mid (q^n-1)/(q^d-1)$ for all proper divisors $d$ of $n$, then $\Phi_n(q) \mid m$ for all $m \in \supp(f)$. Thus for all $k \in \mathbb{Z}_{q^n-1}$, 
   $$
   \hat{f}(k) = \sum_{j \in \mathbb{Z}_{q^n-1}} f(j) \zeta^{kj} = \sum_{a = 1}^{(q^n-1)/\Phi_n(q)} f\left(a \Phi_n(q) \right) \zeta^{k a \Phi_n(q) },
   $$
   where $\hat{f} = \mathcal{F}_\zeta[f]$. Note that $\hat{f}(k + (q^n-1)/\Phi_n(q)) = \hat{f}(k)$ for all $k \in \mathbb{Z}_{q^n-1}$. Thus $\hat{f}$ is $\frac{q^n-1}{\Phi_n(q)}$-periodic. 
   Necessarily the least period of $\hat{f}$ divides $\frac{q^n-1}{\Phi_n(q)}$, a contradiction.
   \\
   \\
   (ii) Assume $\supp(F)$ contains an element of degree $n$ over $\F$. 
   Then there exists $m \in \supp(f)$ with $(q^n-1)/(q^d-1) \nmid m$ for all proper divisors $d$ of $n$. Let $r$ be the least period of $\hat{f}$. By Proposition \ref{prop: period of DFT},
   $(q^n-1)/r \mid m$. Since $(q^n-1)/(q^d-1) \nmid m$, then $r \nmid q^d-1$ for all proper divisors $d$ of $n$.
   \\
   \\
   (iii) Assume $\supp(F)$ contains a primitive element of $\Fn$. Then there exists $k$ relatively prime to $q^n-1$ such that $\bar{k} \in \supp(f)$. 
  Thus $(\mathbb{Z} / (q^n-1) \mathbb{Z})^\times \cap \supp(f) \neq \emptyset$.
  It follows from Proposition \ref{prop: period of DFT} that both $\mathcal{F}_{\zeta}[f]$ and $\mathcal{F}_{\zeta}^{-1}[f]$ have maximum least period $q^n-1$.
 \end{proof}

 Note that, as the following three examples show, the sufficient (respectively necessary) conditions in Lemma \ref{lem: revision} are not necessary (respectively sufficient).
These may possibly be improved in accordance with the needs of whoever wishes to apply these tools. 
Let us start off by showing that the sufficient condition in (i) is not necessary. 
 
 \begin{eg}
  Recall that $(q^n-1)/\Phi_n(q) = \lcm\{q^d-1 \ : \ d \mid n, \ d < n\}$. Pick any $n$ with at least two prime factors. 
  Then $(q^n-1)/\Phi_n(q) \nmid q^d-1$ for all $d \mid n$, $d < n$. Thus $\zeta^{\Phi_n(q)}$ is of degree $n$ over $\F$. 
  Define the function $F : \Fn \to \Fn$ by $F(\zeta^{\Phi_n(q)}) = 1$ and $F(\xi) = 0$ for all other elements $\xi \in \Fn$. 
  Thus $\supp(F)$ contains an element of degree $n$ over $\F$. The associate function $f : \mathbb{Z}_{q^n-1} \to \Fn$ is defined by $f(k) = 1$ if $k = \Phi_n(q)$ and $f(k) = 0$ otherwise.
  By Proposition \ref{prop: period of DFT}, the least period $r$ of $\mathcal{F}_{\zeta}[f]$ is the smallest positive divisor of $q^n-1$ 
  such that $(q^n-1)/r \mid \Phi_n(q)$, since $\supp(f) = \{\Phi_n(q)\}$. This is $r = (q^n-1)/\Phi_n(q)$. Thus we obtain an example of a function which contains an element of degree $n$ over $\F$ 
  in its support but for which the corresponding least period is a divisor of $(q^n-1)/\Phi_n(q)$. 
 \end{eg}

 The following example shows that the necessary condition in (ii) is not sufficient. 
 
 \begin{eg}
  Similarly as before, pick any $n$ with at least two prime factors. Then $(q^n-1)/\Phi_n(q) \nmid q^d-1$ for all $d \mid n$, $d < n$. Define $F : \Fn \to \Fn$ by $F(\zeta^k) = 1$ if 
  $k = (q^n-1)/(q^d-1)$ for some $d \mid n$, $d < n$, and $F(\xi) = 0$ for all other elements $\xi \in \Fn$. Thus $\supp(F)$ has no element of degree $n$ over $\F$. 
  This defines the associate function $f : \mathbb{Z}_{q^n-1} \to \Fn$ of $F$ with $\supp(f) = \{(q^n-1)/(q^d-1) \ : \ d \mid n, \ d < n\}$. 
  Consider the smallest positive divisor $r$ of $q^n-1$, with $(q^n-1)/r \mid (q^n-1)/(q^d-1)$ for all proper divisors $d$ of $n$. Note that $r$ is divisible by each $q^d-1$, for $d \mid n$, $d < n$; 
  it follows $r = \lcm\{q^d-1 \ : \ d \mid n, \ d < n\} = (q^n-1)/\Phi_n(q)$ with $r \nmid q^d-1$ for all $d \mid n$, $d < n$. By Proposition \ref{prop: period of DFT}, $r = (q^n-1)/\Phi_n(q)$ 
  is the least period of $\mathcal{F}_\zeta[f]$. Thus we have constructed a function $F$ with $\supp(F)$ having no element of degree $ n$ over $\F$ but for which the corresponding least period $r$ satisfies
  $r \nmid (q^d-1)$ for all $d \mid n$, $d < n$. 
 \end{eg}

This last example shows that the necessary condition in (iii) is not sufficient. 

\begin{eg}\label{eg: prim}
 Pick $q,n$ such that $q^n-1$ has at least two non-trivial relatively prime divisors, say $a,b > 1$ with $a,b \mid (q^n-1)$ and $\gcd(a,b) = 1$. 
  The smallest positive divisor $r$ of $q^n-1$ with $(q^n-1)/r \mid a,b$
 is $r = q^n-1$. Now we note that the function $F : \Fn \to \Fn$ defined by $F(\zeta^a) = F(\zeta^b) = 1$ and $F(\xi) = 0$ for all other elements $\xi$ of $\Fn$, 
 contains no primitive element in its support, but the corresponding least period of $\mathcal{F}_\zeta[f]$ is $q^n-1$, by Proposition \ref{prop: period of DFT}.
 \end{eg}

\begin{rmk}
 We remark that  Example \ref{eg: prim} together with Lemma \ref{lem: revision} (i) 
 imply that for any such $a,b$, there exists $k \in \{a,b\}$ such that $(q^n-1)/(q^d-1) \nmid k $ for all proper divisors $d$ of $n$; that is, 
 either $\zeta^a$ or $\zeta^b$ (or both) is an element of degree $n$ over $\F$. This may also have applications in determining whether a polynomial $h(x) \in \F[x]$ 
 has an irreducible factor of degree $n$.
Specifically, if there exist divisors $a,b \geq 1$ of $q^n-1$ with $\gcd(a,b) = 1$ and $h(\zeta^a) = h(\zeta^b) = 0$, then $h(x)$ has an irreducible factor of degree $n$. 
\end{rmk}

Finally we prove Lemma~\ref{lem: factor of deg n}. 

 \begin{proof}[{\bf Proof of Lemma \ref{lem: factor of deg n}}]
   As a function on $\Fn^\times$, note that 
 $$
 S(\xi) = \begin{cases}
             1 & \mbox{ if } h(\xi) = 0\\
             0 & \mbox{ otherwise.}
            \end{cases}
$$
Let $\zeta$ be a primitive element of $\Fn$ and define the function $f : \mathbb{Z}_{q^n-1} \to \F$ by $f(m) = s_m$. 
Thus $f$ has least period $r$ satisfying $r \nmid (q^n-1)/\Phi_n(q)$.
Note that $S(\zeta^i) = \sum_{j} s_j \zeta^{ij} = \sum_j f(j) \zeta^{ij} = \mathcal{F}_\zeta[f](i)$ for each $i \in \mathbb{Z}_{q^n-1}$. 
Then by the criteria (i) of Lemma \ref{lem: revision}, 
there exists an element of degree $n$ over $\F$ in the support of $S$. It follows $h(x)$ has a root of degree $n$ over $\F$ and hence has an irreducible factor of degree $n$ over $\F$.
 \end{proof}

\section{Characteristic elementary symmetric and delta functions}\label{section: delta functions}

In this section we apply Lemma \ref{lem: factor of deg n} for the purposes of studying coefficients of irreducible polynomials. 
We first place the characteristic elementary symmetric functions in the context of their DFT, 
which we shall refer to here simply as delta functions. These delta functions are indicators, with values in a finite field, 
for sets of values in $\mathbb{Z}_{q^n-1}$ whose 
canonical integer representatives have certain Hamming weights in their $q$-adic representation 
and $q$-digits all belonging to the set $\{0,1\}$.
Essentially, characteristic elementary symmetric functions are characteristic generating functions of the sets that the delta functions indicate. 
Then we give in Lemma \ref{lem: delta function}
sufficient conditions for an irreducible polynomial to have a prescribed coefficient. 
Because the delta functions are $q$-symmetric (see Definition \ref{def: q-sym}), 
we also review some useful facts needed in Section~\ref{section: proof of Hansen-Mullen}.

For $\xi \in \Fn$, the characteristic polynomial $h_\xi(x) \in \F[x]$ of degree $n$ over $\F$ with root $\xi$ is given by 
$$
h_\xi(x) = \prod_{k=0}^{n-1}\left( x - \xi^{q^k} \right) = \sum_{w=0}^{n}(-1)^w \sigma_w(\xi) x^{n-w},
$$
where for $0 \leq w \leq n$, $\sigma_w(x) \in \F[x]$ is the {\em characteristic elementary symmetric} polynomial
given by $\sigma_0(x) = 1$ and
$$
 \sigma_w(x) = \sum_{0 \leq i_1 <  \cdots < i_w \leq n-1} x^{q^{i_1} + \cdots + q^{i_w}}, 
 $$
 for $ 1 \leq w \leq n$.
 In particular $\sigma_1 = \Tn$ is the (linear) trace function and $\sigma_n = N_{\Fn/\F}$ is the (multiplicative) norm function. 
 Whenever $q = 2$ and $\xi \neq 0$, then $\sigma_0(\xi) = \sigma_n(\xi) = 1$ always. 
 If $\xi \neq 0$, then (in general) $h_{\xi^{-1}}(x) = (-1)^n \sigma_n(\xi^{-1}) x^n h_\xi(1/x) = h_\xi^*(x)$, where $h_\xi^*(x)$ is the (monic) {\em reciprocal} of $h_\xi(x)$. Thus
 $\sigma_w(\xi) = \sigma_n(\xi) \sigma_{n-w}(\xi^{-1})$. Clearly $h_\xi(x)$ is irreducible if and only if so is $h_{\xi}^*(x)$. 
 This occurs if and only if $\deg_{\F}(\xi) = n$.
 
Next we introduce the characteristic delta functions and the sets they indicate. But first let us clarify some ambiguity in our notation: 
For $a,b \in \mathbb{Z}$, we denote by $a \bmod b$ the remainder of division of $a$ by $b$.
That is, $a \bmod b$ is the smallest integer $c$ in $\{0, 1, \ldots, b-1\}$ that is congruent to $a$ modulo $b$, and write $c = a\bmod b$. 
Similarly if $\bar{a} = a + b\mathbb{Z}$ is an element of $\mathbb{Z}_b$, we use the notation $\bar{a} \bmod b := a \bmod b$ to express the canonical representative of $\bar{a}$ in $\mathbb{Z}$.
But we keep the usual notation $k \equiv a \pmod{b}$ to state that $b \mid (k-a)$.

We can represent $a \in \mathbb{Z}_{q^n-1}$ uniquely by the $q$-adic representation $(a_0, \ldots, a_{n-1})_q = \sum_{i=0}^{n-1}a_i q^i$, with each $0\leq a_i \leq q-1$, 
of the canonical representative of $a$ in $\{0, 1, \ldots, q^n-2\} \subset \mathbb{Z}$.
For the sake of convenience we write $a = (a_0, \ldots, a_{n-1})_q$.
For $w \in [0, n] := \{0, 1, \ldots, n\}$, define the sets $\Omega(w) \subseteq \mathbb{Z}_{q^n-1}$ by $\Omega(0) = \{0\}$ and 
 $$
 \Omega(w) = \left\{ k \in \mathbb{Z}_{q^n-1} \ : \ k \bmod (q^n-1) = q^{i_1} + \cdots + q^{i_w}, \ 0 \leq i_1 <  \cdots < i_w \leq n-1              \right\}
$$
for $1 \leq w \leq n$. That is, $\Omega(w)$ consists of all the elements $k \in \mathbb{Z}_{q^n-1}$ whose canonical representatives in $\{0, 1, \ldots, q^n-2\} \subset \mathbb{Z}$ 
have Hamming weight $w$ in their 
$q$-adic representation $(a_0, \ldots, a_{n-1})_q = \sum_{i=0}^{n-1} a_i q^i$, 
with each $a_i \in \{0,1\}$.
Note this last condition that each $a_i \in \{0,1\}$ is automatically redundant when $q = 2$, 
since in general each $a_i \in [0, q-1]$ in the $q$-adic representation $t = (a_{0}, \ldots, a_m)_q$ of
a non-negative integer
$
t = \sum_{i=0}^{m}a_i q^i.
$

When $q = 2$, note $\Omega(n) = \emptyset$ since there is no integer in $\{0, 1, \ldots, 2^n-2\}$ with Hamming weight $n$ in its binary representation. 
Observe also that $|\Omega(w)| = {n \choose w}$ for each $0 \leq w \leq n$, unless $(q,w) = (2,n)$. 
Moreover $\Omega(v) \cap \Omega(w) = \emptyset$ whenever $v \neq w$, by the uniqueness of base representation of integers. 
For $w \in [0, n]$, define the characteristic (finite field valued) function
$\delta_w : \mathbb{Z}_{q^n-1} \to \mathbb{F}_p$ of the set $\Omega(w)$ by
$$
\delta_w(k) = \begin{cases}
              1 & \mbox{ if } k \in \Omega(w);\\
              0 & \mbox{ otherwise.}
              \end{cases}
              $$
Observe that our $\delta_0$ is the Kronecker delta function on $\mathbb{Z}_{q^n-1}$ 
with values in $\{0,1\} \subseteq \mathbb{F}_p$.

\begin{lem}\label{lem: sigma delta}
 Let $\zeta$ be a primitive element of $\Fn$ and let $w \in [0, n]$. If $q = 2$, further assume that $w \neq n$. Then   
 $$
 \sigma_w(\zeta^k) = \mathcal{F}_\zeta[\delta_w](k), \hspace{2em} k \in \mathbb{Z}_{q^n-1}.
 $$
\end{lem}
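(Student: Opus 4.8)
The plan is to unwind both sides directly and observe that they are literally the same sum, the only genuine content being a bijection supplied by the uniqueness of $q$-adic representation.

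First I would expand the right-hand side. Since $\delta_w$ is by definition the indicator function of $\Omega(w)$, the definition of the DFT gives
$$
\mathcal{F}_\zeta[\delta_w](k) = \sum_{j \in \mathbb{Z}_{q^n-1}} \delta_w(j)\,\zeta^{kj} = \sum_{j \in \Omega(w)} \zeta^{kj}.
$$
Thus everything reduces to showing that $\sum_{j \in \Omega(w)} \zeta^{kj}$ equals $\sigma_w(\zeta^k) = \sum_{0 \le i_1 < \cdots < i_w \le n-1} \zeta^{k(q^{i_1} + \cdots + q^{i_w})}$, the last equality coming from $(\zeta^k)^{q^{i_1}+\cdots+q^{i_w}} = \zeta^{k(q^{i_1}+\cdots+q^{i_w})}$.

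The key step is to set up a bijection between the indexing set $\{(i_1,\ldots,i_w) : 0 \le i_1 < \cdots < i_w \le n-1\}$ of $w$-subsets and $\Omega(w)$, sending each subset to the class of $q^{i_1}+\cdots+q^{i_w}$ in $\mathbb{Z}_{q^n-1}$. The point I must verify is that the integer $q^{i_1}+\cdots+q^{i_w}$ already lies in $\{0,1,\ldots,q^n-2\}$, so that it is its own canonical representative and no reduction modulo $q^n-1$ intervenes. This integer has $q$-adic digits all in $\{0,1\}$ and Hamming weight $w$; it can equal $q^n-1 = (q-1,\ldots,q-1)_q$ only when $q-1 = 1$ and every digit is $1$, i.e.\ precisely when $(q,w) = (2,n)$, which is the excluded case. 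Hence in all remaining cases the sum lands in the correct range, and uniqueness of $q$-adic expansions forces distinct $w$-subsets to yield distinct elements of $\Omega(w)$; conversely every element of $\Omega(w)$ arises this way, directly from the definition of $\Omega(w)$. This makes the map a bijection.

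Finally I would use the bijection to reindex:
$$
\mathcal{F}_\zeta[\delta_w](k) = \sum_{j \in \Omega(w)} \zeta^{kj} = \sum_{0 \le i_1 < \cdots < i_w \le n-1} \zeta^{k(q^{i_1} + \cdots + q^{i_w})} = \sigma_w(\zeta^k),
$$
which is the claim. The degenerate case $w = 0$ (where $\sigma_0 = 1$, $\Omega(0) = \{0\}$, and $\mathcal{F}_\zeta[\delta_0](k) = \zeta^{0} = 1$) is consistent under the empty-product conventions and needs no separate treatment. The main (and essentially only) obstacle is the range-and-bijection check in the previous paragraph, which is exactly where the hypothesis $(q,w) \neq (2,n)$ is consumed; the rest is formal rewriting.
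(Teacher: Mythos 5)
Your proof is correct and follows essentially the same route as the paper's: expand $\mathcal{F}_\zeta[\delta_w](k)$ as a sum over $\Omega(w)$ and match it term by term with the defining sum for $\sigma_w(\zeta^k)$ via the correspondence between $w$-subsets of $[0,n-1]$ and elements of $\Omega(w)$. The paper's own proof is just a terser version of this; your explicit check that $q^{i_1}+\cdots+q^{i_w}$ stays below $q^n-1$ except when $(q,w)=(2,n)$ is exactly the point the hypothesis is there to guarantee.
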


\begin{proof}
Note $\sigma_0(\zeta^k) = 1$ for each $k$ and so $\sigma_0(\zeta^k) = \mathcal{F}_\zeta[\delta_0](k)$. 
Now let $ 1 \leq w \leq n$.
 By definition and the assumption that $(q,w) \neq (2,n)$, we have
\begin{align*}
 \sigma_w(\zeta^k) &= \sum_{0 \leq i_1 <  \cdots < i_w \leq n-1} \zeta^{k\left(q^{i_1} + \cdots  + q^{i_w}\right)} = \sum_{j \in \mathbb{Z}_{q^n-1}} \delta_w(j) \zeta^{kj} \\
 &= \mathcal{F}_\zeta[\delta_w](k) .
\end{align*}
 \end{proof}

These functions are related to various mathematical objects in literature: 
Let $m < q$, let $r_1, \ldots, r_m \in [1, n-1]$, and let $c_0, \ldots, c_{n-1} \in [0, m-1]$
such that $\sum_{i=1}^m r_i = \sum_{j=0}^{n-1} c_j$. View each $\delta_{r_1}, \ldots, \delta_{r_m}$ as having values in $\mathbb{Z}$.
Then one can show that 
$$\delta_{r_1} \otimes \cdots \otimes \delta_{r_m}((c_0, \ldots, c_{n-1})_q)$$
is the number of $m \times n$ matrices, 
with entries in $\{0, 1\} \subset \mathbb{Z}$, 
such that the sum of the entries in row $i$, $1 \leq i \leq m$, is $r_i$, and the sum of the entries in column $j$, $0 \leq j \leq n-1$, is $c_j$.
Matrices with 0--1 entries and prescribed row and column sums are classical
objects appearing in numerous branches of pure and applied mathematics, such as combinatorics, algebra and statistics.
See for instance the survey in \cite{Barvinok} and Chapter 16 in \cite{Lint}.

 An application of Lemma \ref{lem: factor of deg n} yields the following sufficient condition for the existence of irreducible polynomials with a prescribed coefficient.
 
 \begin{lem}\label{lem: delta function}
  Fix a prime power $q$ and integers $n \geq 2$ and $1 \leq w \leq n$. Fix $c \in \F$. If $q = 2$, further assume that $w \neq n$. 
  If the function $\Delta_{w,c} : \mathbb{Z}_{q^n-1} \to \F$ given by 
  $$
  \Delta_{w,c} = \delta_0 - ((-1)^w \delta_w - c \delta_0)^{\otimes(q-1)}
  $$
  has least period $r$ satisfying $r \nmid (q^n-1)/\Phi_n(q)$, then there exists an irreducible polynomial $P(x)$ of degree $n$ over $\F$ with $[x^{n-w}]P(x) = c$.
 \end{lem}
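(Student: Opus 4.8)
The plan is to recognize $\Delta_{w,c}$ as exactly the coefficient sequence of the polynomial $S(x)$ that Lemma~\ref{lem: factor of deg n} attaches to the characteristic elementary symmetric function $\sigma_w$, and then to invoke that lemma directly. First I would set
$$
h(x) = (-1)^w \sigma_w(x) - c \in \F[x].
$$
Since each conjugate sum $\sigma_w(\xi)$ is fixed by the Frobenius (raising each $\xi^{q^i}$ to the $q$-th power merely permutes the summands), we have $\sigma_w(\xi) \in \F$, hence $h(\Fn^\times) \subseteq \F$. Thus Lemma~\ref{lem: factor of deg n} applies with $L = \F$ and $\#L^\times = q-1$, and the associated polynomial is $S(x) = \left(1 - h(x)^{q-1}\right) \bmod \left(x^{q^n-1}-1\right)$. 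As a function on $\Fn^\times$ one has $S(\xi) = 1$ when $h(\xi) = 0$ and $S(\xi) = 0$ otherwise, because $a^{q-1} \in \{0,1\}$ for every $a \in \F$.

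The key step is to identify the coefficient sequence $(s_i)$ of $S(x)$ with $\Delta_{w,c}$. I would compute the DFT of $\Delta_{w,c}$, using that $\mathcal{F}_\zeta$ carries convolution to pointwise product and is $\F$-linear. Since $\mathcal{F}_\zeta[\delta_0] \equiv 1$ and, by Lemma~\ref{lem: sigma delta} (this is where the hypothesis $(q,w) \neq (2,n)$ enters), $\mathcal{F}_\zeta[\delta_w](k) = \sigma_w(\zeta^k)$, one finds
$$
\mathcal{F}_\zeta[\Delta_{w,c}](k) = 1 - \left((-1)^w \sigma_w(\zeta^k) - c\right)^{q-1} = 1 - h(\zeta^k)^{q-1} = S(\zeta^k).
$$
On the other hand, writing $s(i) := s_i$, we have $S(\zeta^k) = \sum_i s_i \zeta^{ik} = \mathcal{F}_\zeta[s](k)$. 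As the DFT is injective, $\Delta_{w,c} = s$ as functions on $\mathbb{Z}_{q^n-1}$; in particular their least periods coincide.

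With this identification the conclusion is immediate. The hypothesis that $\Delta_{w,c}$ has least period $r$ with $r \nmid (q^n-1)/\Phi_n(q)$ says precisely that the cyclic coefficient sequence $(s_i)$ of $S(x)$ has this least period, so Lemma~\ref{lem: factor of deg n} yields an irreducible factor of $h(x)$ of degree $n$ over $\F$. Choosing a root $\xi$ of this factor gives $\deg_{\F}(\xi) = n$ together with $h(\xi) = 0$, i.e.\ $(-1)^w \sigma_w(\xi) = c$. The characteristic polynomial $h_\xi(x) = \sum_{j=0}^{n} (-1)^j \sigma_j(\xi) x^{n-j}$ is then the sought polynomial: it has degree $n$, it is irreducible because $\deg_{\F}(\xi) = n$, and $[x^{n-w}] h_\xi(x) = (-1)^w \sigma_w(\xi) = c$.

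The only step demanding real care is the middle one, the clean equality $\Delta_{w,c} = (s_i)$. Everything rests on the observation that the peculiar shape $\delta_0 - ((-1)^w \delta_w - c\delta_0)^{\otimes(q-1)}$ is exactly the DFT-preimage of $1 - h^{q-1}$: the outer $\delta_0$ supplies the constant $1$ (recall $\mathcal{F}_\zeta[\delta_0] \equiv 1$), while the $(q-1)$-fold convolution power accounts, through the convolution theorem, for the $(q-1)$-st power that turns the $\F$-valued function $h$ into the indicator of its zero set. I expect no genuine obstacle beyond this bookkeeping, since Lemmas~\ref{lem: sigma delta} and~\ref{lem: factor of deg n} carry the analytic weight; one must only keep straight that $\delta_0$ is the Kronecker delta on $\mathbb{Z}_{q^n-1}$ and recall that the case $(q,w)=(2,n)$ is excluded exactly so that Lemma~\ref{lem: sigma delta} remains available.
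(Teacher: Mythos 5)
Your proposal is correct and follows essentially the same route as the paper's own proof: take $h(x) = (-1)^w\sigma_w(x) - c$ with $L = \F$ in Lemma~\ref{lem: factor of deg n}, identify the coefficient sequence of $S(x)$ with $\Delta_{w,c}$ via Lemma~\ref{lem: sigma delta}, the convolution theorem, and injectivity of the DFT, then read off the prescribed coefficient from a degree-$n$ root of $h$. No gaps; the identification $P(x)=h_\xi(x)$ at the end is exactly how the paper concludes as well.
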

 
 \begin{proof}
  Take $h(x) = (-1)^w\sigma_w(x) - c \in \F[x]$ in Lemma \ref{lem: factor of deg n}. 
  Since $\sigma_w(\Fn) \subseteq \F$, we can pick $L = \F$. Thus $S(x) \in \F[x] $ is given by 
  $$
  S(x) = \left[1 - ((-1)^w \sigma_w(x) - c)^{q-1}\right] \bmod\left(x^{q^n-1} - 1\right). 
  $$ 
  Let $\zeta$ be a primitive element of $\Fn$. 
  By Lemma \ref{lem: sigma delta}, the linearity of the DFT, and the fact that $c = \mathcal{F}_\zeta[c \delta_0]$, we have
 $$
   S(\zeta^i) = 1 - \left((-1)^w \sigma_w(\zeta^i) - c\right)^{q-1} = \mathcal{F}_\zeta[\delta_0](i) - \left(\mathcal{F}_\zeta\left[(-1)^w\delta_w - c \delta_0\right](i) \right)^{q-1}.
 $$
Since the product of DFTs is the DFT of the convolution, then, as a function on $\Fn$,
\begin{align*}
S &= \mathcal{F}_\zeta[\delta_0] - \mathcal{F}_\zeta\left[\left((-1)^w\delta_w - c \delta_0\right)^{\otimes(q-1)}\right]
= \mathcal{F}_\zeta\left[ \delta_0 - \left((-1)^w\delta_w - c \delta_0\right)^{\otimes(q-1)} \right]\\
&= \mathcal{F}_\zeta[\Delta_{w,c}].
\end{align*}
Thus
$$
S(\zeta^m) = \sum_{i=0}^{q^n-2}\Delta_{w,c}(i) \zeta^{mi}
$$
for each $m \in \mathbb{Z}_{q^n-1}$.
As $S(x)$ is already reduced modulo $x^{q^n-1} - 1$, it follows (from the uniqueness of the DFT of a function) that $S(x) = \sum_{i=0}^{q^n-2}\Delta_{w,c}(i) x^i$. 
Since the least period of $\Delta_{w,c}$
is not a divisor of $(q^n-1)/\Phi_n(q)$ by assumption, 
Lemma \ref{lem: factor of deg n} implies $h(x)$ has an irreducible factor $P(x)$ of degree $n$ over $\F$. 
Any of the roots $\xi$ of $P(x)$ must satisfy $h(\xi) = 0$, that is, $(-1)^w \sigma_w(\xi) = c$. 
This is the coefficient of $x^{n-w}$ in $P(x)$. Hence $[x^{n-w}]P(x) = c$ with $P(x)$
irreducible of degree $n$ over $\F$. 
\end{proof}

Note the delta functions also satisfy the property that 
\begin{equation}\label{eqn: sym property}
\delta_{w}((a_0, \ldots, a_{n-1})_q) = \delta_{w}((a_{\rho(0)}, \ldots, a_{\rho(n-1)})_q)
\end{equation}
for every permutation $\rho$ of the indices in $[0, n-1]$. In particular such functions have a natural well-studied dyadic analogue in the 
{\em symmetric boolean} functions. These are boolean functions $f : \mathbb{F}_{2}^n \to \mathbb{F}_2$ with the property that 
$f(x_0, \ldots, x_{n-1}) = f(x_{\rho(0)}, \ldots, x_{\rho(n-1)})$ for every permutation $\rho \in \mathcal{S}_{[0, n-1]}$;
hence the value of $f(x_0, \ldots, x_{n-1})$ depends only on the Hamming weight of $(x_0, \ldots, x_{n-1})$. 
See for example \cite{Canteaut, Castro} for some works on symmetric boolean functions. 
Nevertheless in our case the domain of these $\delta_w$ functions 
is $\mathbb{Z}_{q^n-1}$ rather than $\mathbb{F}_{2}^n$. Although one may still represent the elements of $\mathbb{Z}_{q^n-1}$
as $n$-tuples, say by using the natural $q$-adic representation, the arithmetic here is not as nice as in $\mathbb{F}_{2}^n$.
One has to consider the possibility that a ``carry'' may occur when adding or subtracting (this can make things quite chaotic)
and also worry about reduction modulo $q^n-1$ (although this is much easier to deal with). 
These issues will come up again in the following section. 
The symmetry property in (\ref{eqn: sym property}) of 
$\delta_w$ and of its convolutions will be exploited
in the proof of Lemma \ref{lem: period of delta and hansen-mullen} for the case when $(w,c) = (n/2, 0)$. 

Before we move on to the following section, we need the fact in Lemma \ref{lem: convolution of q-sym is q-sym}.
First for a permutation $\rho \in \mathcal{S}_{[0, n-1]}$ of the indices in the set $[0, n-1]$, define the map $\varphi_\rho : \mathbb{Z}_{q^n-1} \to \mathbb{Z}_{q^n-1}$ by 
\begin{equation}\label{eqn: base q bijection}
\varphi_\rho((a_0, \ldots, a_{n-1})_q) = (a_{\rho(0)}, \ldots, a_{\rho(n-1)})_q.
\end{equation}
Note $\varphi_{\rho}$ is a permutation of $\mathbb{Z}_{q^n-1}$ with inverse $\varphi_\rho^{-1} = \varphi_{\rho^{-1}}$, for each $\rho \in \mathcal{S}_{[0, n-1]}$. For $k \in \mathbb{Z}_{q^n-1}$, let $\epsilon_i(k)$, $0 \leq i \leq n-1$,
denote the digit of $q^i$ in the $q$-adic form of 
its canonical representative. Thus $0 \leq \epsilon_i(k) \leq q-1$. For $a,b \in \mathbb{Z}_{q^n-1}$ with $a +b \neq 0$, it is clear that if $\epsilon_i(a) + \epsilon_i(b) \leq q-1$, then $\epsilon_i(a + b) = \epsilon_i(a) + \epsilon_i(b)$.
One can also check, for any $a,b \in \mathbb{Z}_{q^n-1}$ such that $\epsilon_i(a) + \epsilon_i(b) \leq q-1$ holds for every $0 \leq i \leq n-1$, that 
$\varphi_\rho(a + b) = \varphi_\rho(a) + \varphi_\rho(b)$ for every $\rho \in \mathcal{S}_{[0, n-1]}$, regardless of whether $a + b = 0$ or not. 
By induction, $\varphi_\rho(a_1 + \cdots + a_s) = \varphi_\rho(a_1) + \cdots + \varphi_\rho(a_s)$, whenever $a_1, \ldots, a_s \in \mathbb{Z}_{q^n-1}$ satisfy 
$\epsilon_i(a_1) + \cdots + \epsilon_i(a_s) \leq q-1$
for every $0\leq i \leq n-1$.

\begin{defi}[{\bf q-symmetric}]\label{def: q-sym}
 For a function $f$ on $\mathbb{Z}_{q^n-1}$, we say that $f$ is {\em $q$-symmetric} if for all $a = (a_0, \ldots, a_{n-1})_q \in \mathbb{Z}_{q^n-1}$ and all permutations 
$\rho \in \mathcal{S}_{[0, n-1]}$, we have $f(\varphi_\rho(a)) = f(a)$; that is,
$$f((a_{\rho(0)}, \ldots, a_{\rho(n-1)})_q) = f((a_0, \ldots, a_{n-1})_q). $$
\end{defi}

Note the $\delta_w$ functions are $q$-symmetric. Because $\epsilon_i(m) \leq 1$ for each $m \in \supp(\delta_w) = \Omega(w)$ and each $0 \leq i \leq n-1$, 
it follows from the following lemma that the convolution of at most $q-1$ delta functions is also $q$-symmetric.

\begin{lem}\label{lem: convolution of q-sym is q-sym}
 Let $R$ be a ring and let $f_1, \ldots, f_s : \mathbb{Z}_{q^n-1} \to R$ be $q$-symmetric functions such that for each $a_k \in \supp(f_k)$, $1 \leq k \leq s$, we have
 $\epsilon_i(a_1) + \cdots + \epsilon_i(a_s) \leq q-1$ for every $0 \leq i \leq n-1$. Then $f_1 \otimes \cdots \otimes f_s$ is $q$-symmetric. 
\end{lem}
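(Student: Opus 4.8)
The plan is to expand the convolution as a sum over ordered $s$-tuples, discard the summands that vanish (those having some coordinate outside the relevant support), and then re-index the surviving terms by applying $\varphi_\rho$ coordinatewise. The decisive point is that the hypothesis $\sum_k \epsilon_i(a_k) \le q-1$ is precisely what turns $\varphi_\rho$ into an \emph{additive} map on exactly those index tuples that actually contribute.

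Concretely, fix $a \in \mathbb{Z}_{q^n-1}$ and $\rho \in \mathcal{S}_{[0, n-1]}$, and write $f := f_1 \otimes \cdots \otimes f_s$. I would begin from
$$
f(\varphi_\rho(a)) = \sum_{\substack{j_1 + \cdots + j_s = \varphi_\rho(a) \\ j_1, \ldots, j_s \in \mathbb{Z}_{q^n-1}}} f_1(j_1) \cdots f_s(j_s),
$$
observing that a summand is zero unless $j_k \in \supp(f_k)$ for every $k$, so only such tuples matter. I would then record two facts about these tuples. First, since each $f_k$ is $q$-symmetric, its support is invariant under both $\varphi_\rho$ and $\varphi_\rho^{-1} = \varphi_{\rho^{-1}}$, because $x \in \supp(f_k) \iff f_k(\varphi_\rho(x)) = f_k(x) \neq 0 \iff \varphi_\rho(x) \in \supp(f_k)$. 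Second, whenever each $j_k \in \supp(f_k)$, the hypothesis yields $\epsilon_i(j_1) + \cdots + \epsilon_i(j_s) \le q-1$ for all $i$, so by the additivity fact stated just before the lemma, $\varphi_\rho(j_1 + \cdots + j_s) = \varphi_\rho(j_1) + \cdots + \varphi_\rho(j_s)$.

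With these in hand I would introduce the coordinatewise map $\Psi(j_1', \ldots, j_s') = (\varphi_\rho(j_1'), \ldots, \varphi_\rho(j_s'))$ and argue that it is a bijection from $\{(j_1', \ldots, j_s') : j_k' \in \supp(f_k),\ \sum_k j_k' = a\}$ onto $\{(j_1, \ldots, j_s) : j_k \in \supp(f_k),\ \sum_k j_k = \varphi_\rho(a)\}$, with inverse the coordinatewise $\varphi_{\rho^{-1}}$. Well-definedness in both directions uses support-invariance together with the additivity fact: if $\sum_k j_k' = a$ with each $j_k' \in \supp(f_k)$, then $\sum_k \varphi_\rho(j_k') = \varphi_\rho(\sum_k j_k') = \varphi_\rho(a)$, and symmetrically for $\varphi_{\rho^{-1}}$ and $\varphi_\rho(a)$. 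Finally $q$-symmetry gives $f_k(\varphi_\rho(j_k')) = f_k(j_k')$, so the two convolution sums agree term by term under $\Psi$, whence $f(\varphi_\rho(a)) = f(a)$, as required.

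The only delicate point—and hence the main obstacle—is this bookkeeping that confines the use of additivity to where it is licensed: $\varphi_\rho$ is \emph{not} additive on all of $\mathbb{Z}_{q^n-1}$, since carries in the $q$-adic representation break additivity, so I must first cut the convolution down to the supports and invoke $\sum_k \epsilon_i(j_k) \le q-1$ to ensure no carry occurs on precisely those tuples. Only after that restriction does re-indexing by $\varphi_\rho$ preserve the summation constraint. Everything remaining is the routine verification that $\Psi$ is a bijection between the two constrained index sets and that the matched summands coincide by $q$-symmetry.
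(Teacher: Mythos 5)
Your proposal is correct and follows essentially the same route as the paper: both arguments restrict the convolution sum to tuples from the supports, use $q$-symmetry to get $\varphi_\rho$-invariance of each $\supp(f_k)$, invoke the carry-free additivity $\varphi_\rho(j_1+\cdots+j_s)=\varphi_\rho(j_1)+\cdots+\varphi_\rho(j_s)$ licensed by the digit hypothesis, and re-index by applying $\varphi_\rho$ coordinatewise. The paper presents this re-indexing as a chain of equalities rather than naming the bijection $\Psi$ explicitly, but the content is identical.
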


\begin{proof}
Recall the assumption on the supports imply that $\varphi_\tau(a_1 + \cdots + a_s) = \varphi_\tau(a_1) + \cdots + \varphi_\tau(a_s)$ 
for any $a_k \in \supp(f_k)$, $1 \leq k \leq s$, and any $\tau \in \mathcal{S}_{[0,n-1]}$. 
Since each $f_k$ is $q$-symmetric, $1 \leq k \leq s$, then $f_k(a) = f_k(\varphi_\tau(a))$ for every $a \in \mathbb{Z}_{q^n-1}$. 
In particular $a \in \supp(f_k)$ if and only if $\varphi_\tau(a) \in \supp(f_k)$; hence $\varphi_\tau(\supp(f_k)) = \supp(f_k)$.
Now let $m \in \mathbb{Z}_{q^n-1}$ and let $\rho \in \mathcal{S}_{[0, n-1]}$. Then it follows from the aforementioned observations that
\begin{align*}
 (f_1 \otimes \cdots \otimes f_s)(\varphi_\rho(m)) 
 &= \sum_{\substack{j_1 + \cdots + j_s = \varphi_\rho(m) \\ j_1, \ldots, j_s \in \mathbb{Z}_{q^n-1}}} f_1(j_1) \cdots f_s(j_s)\\
  &= \sum_{\substack{j_1 + \cdots + j_s = \varphi_\rho(m) \\ j_1 \in \supp(f_1), \ldots, j_s \in \supp(f_s)}} f_1(j_1) \cdots f_s(j_s)\\
  &= \sum_{\substack{\varphi_{\rho^{-1}}(j_1 + \cdots + j_s) = m \\ j_1 \in \supp(f_1), \ldots, j_s \in \supp(f_s)}} f_1(j_1) \cdots f_s(j_s)\\
  &= \sum_{\substack{\varphi_{\rho^{-1}}(j_1) + \cdots + \varphi_{\rho^{-1}}(j_s) = m \\ j_1 \in \supp(f_1), \ldots, j_s \in \supp(f_s)}} f_1(j_1) \cdots f_s(j_s)\\
  &= \sum_{\substack{j_1 + \cdots + j_s = m \\ \varphi_\rho(j_1) \in \supp(f_1), \ldots, \varphi_\rho(j_s) \in \supp(f_s)}} f_1(\varphi_\rho(j_1)) \cdots f_s(\varphi_\rho(j_s))\\
  &= \sum_{\substack{j_1 + \cdots + j_s = m \\ \varphi_\rho(j_1) \in \supp(f_1), \ldots, \varphi_\rho(j_s) \in \supp(f_s)}} f_1(j_1) \cdots f_s(j_s)\\
  &= \sum_{\substack{j_1 + \cdots + j_s = m \\ j_1 \in \varphi_{\rho^{-1}}(\supp(f_1)), \ldots, j_s \in \varphi_{\rho^{-1}}(\supp(f_s))}} f_1(j_1) \cdots f_s(j_s)\\
  &= \sum_{\substack{j_1 + \cdots + j_s = m \\ j_1 \in \supp(f_1), \ldots, j_s \in \supp(f_s)}} f_1(j_1) \cdots f_s(j_s)\\
  &= \sum_{\substack{j_1 + \cdots + j_s = m \\ j_1, \ldots, j_s \in \mathbb{Z}_{q^n-1}}} f_1(j_1) \cdots f_s(j_s)\\
  &= (f_1 \otimes \cdots \otimes f_s)(m),
 \end{align*}
as required.
\end{proof}

\section{Least period of $\Delta_{w,c}$ and proof of Theorem \ref{thm: hansen-mullen}}\label{section: proof of Hansen-Mullen}

In this section we prove in Lemma \ref{lem: period of delta and hansen-mullen} that the $\Delta_{w,c}$ function of Lemma \ref{lem: delta function} 
has least period larger than $(q^n-1)/\Phi_n(q)$, at least in the cases 
that suffice for a proof of Theorem \ref{thm: hansen-mullen}. 
Note the proof of Lemma \ref{lem: period of delta and hansen-mullen} is of a rather elementary and constructive type nature.
We then conclude the work with an immediate proof of Theorem~\ref{thm: hansen-mullen}. 
First for an integer $k = \sum_{i=0}^\infty \epsilon_i(k) q^i$, we let $s_q(k) = \sum_{i=0}^\infty \epsilon_i(k)$ denote the sum of the $q$-digits of $k$. 

\begin{lem}\label{lem: period of delta and hansen-mullen}
 Let $q$ be a power of a prime, let $n \geq 2$, let $w$ be an integer with $1 \leq w \leq n/2$, and let $c \in \F$. 
 If $c = 0$ and $n = 2$, further assume that $q$ is odd. Then the least period $r$ of $\Delta_{w,c}$ satisfies $r > (q^n-1)/\Phi_n(q)$.
 More precisely, we have the following three results:
 \\
 (i) If $c \neq 0$, or $c = 0$ and $w \neq n/2$, or $n > 2$ and $q$ is even with $(w,c) = (n/2, 0)$, then $r = q^n-1$;
 \\
 (ii) If $c = 0$, $q$ is odd, $n > 2$ and $w = n/2$, then $r \geq (q^n- 1)/2$;
 \\
 (iii) If $c = 0$, $w = 1$, $q$ is odd and $n = 2$, then $r > q-1$.
\end{lem}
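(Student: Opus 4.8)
The plan is to turn the least period of $\Delta_{w,c}$ into a multiplicative question about a level set of $\sigma_w$, and then to show that this level set generates a large subgroup of $\Fn^\times$. Keeping the notation of the proof of Lemma~\ref{lem: delta function}, we have $S=\mathcal{F}_\zeta[\Delta_{w,c}]$ with $S(\zeta^i)=1-((-1)^w\sigma_w(\zeta^i)-c)^{q-1}$; since $(-1)^w\sigma_w(\zeta^i)-c\in\F$, this value is $1$ exactly when $(-1)^w\sigma_w(\zeta^i)=c$ and $0$ otherwise. Hence, writing $G=\{\xi\in\Fn^\times:(-1)^w\sigma_w(\xi)=c\}$, we get $\supp(S)=\{i\in\mathbb{Z}_{q^n-1}:\zeta^i\in G\}$. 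As $\Delta_{w,c}=-\mathcal{F}_{\zeta^{-1}}[S]$ and $\zeta^{-1}$ is again primitive, Proposition~\ref{prop: period of DFT} (applied over $\Fn$) gives least period $r=(q^n-1)/\gcd(q^n-1,\supp(S))$. Because every subgroup of the cyclic group $\Fn^\times$ is some $\langle\zeta^g\rangle$ with $g\mid q^n-1$, and $\gcd(q^n-1,\supp(S))$ is the largest such $g$ with $G\subseteq\langle\zeta^g\rangle$, this repackages as $r=|\langle G\rangle|$. The three assertions then read $\langle G\rangle=\Fn^\times$ in (i), $[\Fn^\times:\langle G\rangle]\le 2$ in (ii), and $|\langle G\rangle|>q-1$ in (iii); each exceeds $(q^n-1)/\Phi_n(q)=\lcm\{q^d-1:d\mid n,\ d<n\}$, using $\Phi_n(q)>q-1\ge 2$ from \eqref{eqn: cyclotomic inequality} for (i),(ii) and $(q^2-1)/\Phi_2(q)=q-1$ for (iii).

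The structural engine is the homogeneity $\sigma_w(\alpha\xi)=\alpha^w\sigma_w(\xi)$ for $\alpha\in\F^\times$. When $c=0$ it makes $G$ stable under multiplication by $\F^\times$, so $\F^\times\le\langle G\rangle$ once $G\neq\emptyset$, and it remains only to control the image of $G$ in $\Fn^\times/\F^\times$. This settles (iii) at once: for $n=2$, $w=1$, $c=0$, $q$ odd, one has $G=\{\xi:\Tn(\xi)=0\}=\{\xi:\xi^{q-1}=-1\}$, a single coset of $\F^\times$; fixing $\xi_0\in G$ gives $\xi_0^2\in\F^\times$ but $\xi_0\notin\F^\times$, so $\langle G\rangle=\langle\xi_0\rangle\F^\times$ has order $2(q-1)>q-1$. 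For the middle coefficient $w=n/2$ I would also use the reciprocal identity $\sigma_w(\xi)=\sigma_n(\xi)\,\sigma_{n-w}(\xi^{-1})$, which for $w=n/2$ and $c=0$ renders $G$ invariant under $\xi\mapsto\xi^{-1}$; combined with the $q$-symmetry of $\delta_w$ and its convolutions (see \eqref{eqn: sym property} and Lemma~\ref{lem: convolution of q-sym is q-sym}), this inversion symmetry is what forces $[\Fn^\times:\langle G\rangle]\le 2$ in (ii). It is the parity this introduces in odd characteristic that both blocks any improvement past $(q^n-1)/2$ and, at $(n,w,c)=(2,1,0)$ with $q$ even, degenerates into the genuine exception excluded from the hypotheses; in even characteristic the obstruction disappears, consistent with $(n/2,0)$ sitting in case (i).

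The heart of the argument, and the step I expect to be the main obstacle, is the generic claim $\langle G\rangle=\Fn^\times$ of part (i): $G$ must lie in no maximal subgroup $\langle\zeta^\ell\rangle$, i.e.\ must contain a non-$\ell$-th-power, for every prime $\ell\mid q^n-1$. Primes $\ell\mid q-1$ are the accessible ones, since then $\xi$ is an $\ell$-th power in $\Fn^\times$ if and only if $N_{\Fn/\F}(\xi)=\sigma_n(\xi)$ is an $\ell$-th power in $\F^\times$; for $c=0$ the scaling $N_{\Fn/\F}(\alpha\xi)=\alpha^n N_{\Fn/\F}(\xi)$ with $\alpha\in\F^\times$ keeps us in $G$ and already yields a non-$\ell$-th-power norm whenever $\ell\nmid n$, and the constant coefficient being free ($w\le n/2<n$) gives analogous leverage when $c\neq 0$. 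The genuine difficulty is the remaining primes—those dividing $\Phi_n(q)$, together with the common factors of $\ell$ and $n$—where no norm criterion applies and the crude count $|G|\approx q^{n-1}$ against $|\langle\zeta^\ell\rangle|=(q^n-1)/\ell$ is far too weak for small $\ell$.

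For these primes I would proceed constructively and elementarily, exploiting the explicit form $\Delta_{w,c}=\delta_0-\sum_{t=0}^{q-1}\binom{q-1}{t}(-1)^{wt}(-c)^{q-1-t}\delta_w^{\otimes t}$ and the reading of $\delta_w^{\otimes t}(k)$ as the number, modulo $p$, of $0$--$1$ matrices with $t$ rows of sum $w$ and columns summing to the $q$-adic digits of $k$. Distinguished points such as $k^\ast=(q-1)(q^{i_1}+\cdots+q^{i_w})$, where only the $t=q-1$ term survives and a unique-matrix count gives $\Delta_{w,c}(k^\ast)=-(-1)^{w(q-1)}=\pm 1$ independently of $c$, furnish concrete witnesses against periodicity. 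The task is then to exhibit, for each remaining $\ell$, an element of $G$ escaping $\langle\zeta^\ell\rangle$ (equivalently, a failure of $(q^n-1)/\ell$-periodicity of $\Delta_{w,c}$) by such a digit-and-matrix construction, performed according to the parity of $q$ and the position of $w$ relative to $n/2$, and invoking $q$-symmetry in the symmetric case $w=n/2$. This constructive step, which must absorb the carries of $q$-adic arithmetic, is where the bulk of the work and the origin of the case split in the statement lie.
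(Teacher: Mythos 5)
Your reduction in the first paragraph is correct and is a genuinely nice reformulation: writing $G=\{\xi\in\Fn^\times:(-1)^w\sigma_w(\xi)=c\}$, Proposition~\ref{prop: period of DFT} does give $r=(q^n-1)/\gcd(q^n-1,\{i:\zeta^i\in G\})=|\langle G\rangle|$, and your treatment of case (iii) via $G=\xi_0\F^\times$ with $\xi_0^{q-1}=-1$ is complete and correct (it even pins down $r=2(q-1)$). But the proposal does not prove parts (i) and (ii), which are the substance of the lemma. For (i) you only handle primes $\ell\mid q-1$ with $\ell\nmid n$ and $c=0$ via the norm criterion; for $c\neq 0$ the phrase ``the constant coefficient being free gives analogous leverage'' is not an argument ($G$ is no longer $\F^\times$-stable when $c\neq0$, so the scaling device is unavailable), and for the primes dividing $\Phi_n(q)$ or $\gcd(\ell,n)$ you explicitly defer to a ``digit-and-matrix construction'' that is announced but never carried out. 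Exhibiting, for every such prime $\ell$, an element of $G$ outside $\langle\zeta^\ell\rangle$ is essentially a restatement of the original problem, not a reduction of it. For (ii), the claim that inversion-invariance of $G$ ``forces'' $[\Fn^\times:\langle G\rangle]\le 2$ is a non sequitur: every subgroup is closed under inversion, and an inversion- and $\F^\times$-stable set can easily generate a subgroup of large index (e.g.\ a single coset $\xi_0\F^\times$ with $\xi_0^2\in\F^\times$), so no actual mechanism for the bound is given.

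For comparison, the paper does not argue prime by prime through subgroups at all. It works directly with a putative period $0<r<q^n-1$ and the $q$-adic digit sum: from the binomial expansion and Lucas' theorem one gets that every $m\in\supp(\Delta_{w,c})$ satisfies $s_q(m\bmod(q^n-1))\le(q-1)w$, while explicit elements $st$ ($t\in\Omega(w)$, $1\le s\le q-1$) lie in the support; a carefully chosen $t$ and $s=q-1-\max_{i\in W}r_i$ make $st+r$ carry-free with digit sum exceeding $(q-1)w$, contradicting $r$-periodicity, and replacing $r$ by $q^n-1-r$ handles the complementary digit-sum range. In the case $c=0$ the same mechanism forces $s_q(r)=s_q(q^n-1-r)=(q-1)w$, hence $w=n/2$, and the $q$-symmetry of $\delta_{n/2}^{\otimes(q-1)}$ (Lemma~\ref{lem: convolution of q-sym is q-sym}) applied to a digit transposition of $r$ eliminates every $r$ except $(q^n-1)/2$. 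If you want to salvage your group-theoretic framing, you would still need to supply an argument of comparable strength for the hard primes; as written, the heart of (i) and all of (ii) remain open.
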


\begin{proof}
We shall suppose that $0 < r < q^n-1$ is a period of $\Delta_{w,c}$ and either aim to obtain a contradiction or show that $r \geq (q^n-1)/2$ or $r > q-1$ as required, in accordance with the 
cases in (i), (ii), (iii).
Now since $\Delta_{w,c}$ is $r$-periodic, $\Delta_{w,c}(m) = \Delta_{w,c}(m \pm r)$ for all $m \in \mathbb{Z}_{q^n-1}$.
Because $q^n-1 = \sum_{i=0}^{n-1}(q-1)q^i$, 
we may write $r = \sum_{i=0}^{n-1} r_i q^i$
for some $q$-digits $r_i$ with each $0\leq r_i \leq q-1$, not all $r_i = q-1$, for $0 \leq i \leq n-1$. 
\\
\\
{\bf Case 1 ($c \neq 0$):} Assume $c \neq 0$. We shall prove that $\Delta_{w,c}$ has maximum least period in this case. On the contrary, suppose that $r$ is a period of $\Delta_{w,c}$ 
with $0 < r < q^n-1$. 
By the binomial theorem for convolution,
\begin{align*}
\left((-1)^w\delta_w -c\delta_0\right)^{\otimes(q-1)} &= \sum_{s=0}^{q-1} {q-1 \choose s} (-c)^{q-1-s} (-1)^{ws} \delta_w^{\otimes s}\\
&= \sum_{s=0}^{q-1} {q-1 \choose s} ((-1)^{w+1} c)^{-s} \delta_w^{\otimes s}.
\end{align*}
Hence 
\begin{align*}
\Delta_{w,c} &:= \delta_0 - \left((-1)^w\delta_w -c\delta_0\right)^{\otimes(q-1)}\\
&= -\sum_{s=1}^{q-1} {q-1 \choose s} ((-1)^{w+1} c)^{-s} \delta_w^{\otimes s}.
\end{align*}
By Lucas' theorem, none of the binomial coefficients above are $0$ modulo $p$, where $p$ is the characteristic of $\F$.
Now note for any $m \in \mathbb{Z}_{q^n-1}$ that $\delta_w^{\otimes s}(m)$ is the number, modulo $p$, of ways to write $m$ as a sum of $s$ ordered values in $\Omega(w)$. 
We avoid dealing with complicated expressions for this number; instead let us note a few simpler facts:

(a) For $1 \leq s \leq q-1$, there occurs no carry in the $q$-adic addition of any $s$ non-negative integers with $q$-digits at most $1$. In particular, viewing $\Omega(w)$ 
as lying in $\{0, 1, \ldots, q^n-2\} \subset \mathbb{Z}$ in the natural way, we conclude there occurs no carry in the addition of any 
$s$ values in $\Omega(w)$, when $1 \leq s \leq q-1$. Since $w < n$ as well, any such addition of $1 \leq s \leq q-1$ elements in $\Omega(w)$ is strictly smaller than $q^n-1$. 

(b) $m \in \supp(\Delta_{w,c})$ if and only if there exists a unique $1 \leq s \leq q-1$ such that $m \in \supp(\delta_w^{\otimes s})$. 
In particular if $m \in \supp(\Delta_{w,c})$, then $s_q(m \bmod(q^n-1)) \leq (q-1)w $. 
Indeed, if $m \in \supp(\delta_w^{\otimes s})$ for $s$ with $1 \leq s \leq q-1$, then $s_q(m \bmod (q^n-1)) = sw$. 
Hence for every $k \neq s$ with $1 \leq k \leq q-1$, we get $\delta_w^{\otimes k}(m) = 0$ since $kw \neq sw$. 

(c) For any $1 \leq s \leq q-1$ and $t \in \Omega(w)$, 
we have $\delta_w^{\otimes s}(st) = 1$. Indeed, it is not hard to see there is exactly one way to write $st$ as a sum of $s$ values in $\Omega(w)$, 
namely as $st = t + \cdots + t$, $s$ times. It follows for every $s$ with $1 \leq s \leq q-1$ and every $t \in \Omega(w)$, that $st \in \supp(\Delta_{w,c})$.

Having gathered a few facts about $\Delta_{w,c}$, we proceed with the proof: 
Clearly either $s_q(r) \leq (q-1)n/2$ or $s_q(r) > (q-1)n/2$. Suppose $s_q(r) \leq (q-1)n/2$. Let $M := \{i \in [0, n-1] \ : \ r_i = q-1\}$ and let $\eta := \#M$. Clearly $s_q(r) \geq (q-1)\eta$.
It is impossible that $\eta > n-w$. Indeed, since $w \leq n/2$, we would have $\eta > n/2$ and $s_q(r) > (q-1)n/2$, a contradiction.
Hence $\eta \leq n-w$ and so $w \leq n - \eta$.
Then there exists a subset $W \subseteq [0, n-1] \setminus M$ with $\#W = w$.
Let $\mathcal{C}$ be the collection of all such subsets $W$. Thus $\mathcal{C} \neq \emptyset$ and $\max_{i \in W} \{r_i\} \leq q-2$ for all $W \in \mathcal{C}$.

We claim there exists $W \in \mathcal{C}$ such that
\begin{equation}\label{eqn: claim 0}
s_q(r) > w \max_{i \in W}\{r_i\}.
\end{equation}
Indeed, suppose on the contrary that
\begin{equation}\label{eqn: claim 1}
s_q(r) \leq w \max_{i \in W}\{r_i\} \hspace{1em} \text{ for all } W \in \mathcal{C}.
\end{equation}
Thus for every $W \subseteq [0, n-1] \setminus M$, with $\#W = w$, there exists $i \in W$ such that $r_i \geq s_q(r)/w$.
In particular $s_q(r) \leq (q-2)w$.
As $\#([0, n-1]\setminus M) = n - \eta$ and each $\#W = w$, it follows from (\ref{eqn: claim 1})
that $r$ has at least $n - \eta - w + 1$ $q$-digits $r_i$, $i \in [0, n-1]\setminus M$, each satisfying
$r_i \geq s_q(r)/w$ (otherwise the number of indices $i \in [0, n-1]\setminus M$ satisfying $r_i < s_q(r)/w$ is at least $w$. This gives a subset $W \subseteq [0, n-1]\setminus M$ of size $w$ 
for which there exists no $i \in W$ with $r_i \geq s_q(r)/w$, a contradiction). 
Since $s_q(r) = \sum_{i \in [0, n-1] \setminus M} r_i + (q-1)\eta$, we obtain
\begin{equation}\label{eqn: claim 2}
 s_q(r) \geq \dfrac{n - \eta - w + 1}{w}s_q(r) + (q-1)\eta.
\end{equation}
Necessarily $(n - \eta - w + 1)/w \leq 1$. Rearranging terms in (\ref{eqn: claim 2}) we get
\begin{equation}\label{eqn: claim 3}
(q-1) \eta \leq s_q(r) \left( 1 - \dfrac{n - \eta - w + 1}{w}\right).
\end{equation}
Now the fact that $s_q(r) \leq (q-2) w$ yields
\begin{equation}\label{eqn: claim 4}
 (q-1) \eta \leq (q-2) w \left( 1 - \dfrac{n - \eta - w + 1}{w}\right),
\end{equation}
which is equivalent to 
\begin{equation}\label{eqn: claim 5}
 \eta \leq (q-2)(2w - n - 1).
\end{equation}
Since $w \leq n/2$, this means that $\eta \leq -(q-2)$. Because $\eta \geq 0$, this however implies that $q = 2$ and $\eta = 0$. But then all digits of $r$ in its binary representation
are zero and hence $r = 0$, a contradiction.
The claim follows. 

Let $W \in \mathcal{C}$ such that
\begin{equation}\label{eqn: claim 6}
 s_q(r) > w \max_{i \in W} \{r_i\}.
\end{equation}
Thus
\begin{equation}\label{eqn: claim 7}
 s_q(r) + (q-1 - \max_{i \in W}\{r_i\})w > (q-1)w.
\end{equation}
Let $s = q-1 - \max_{i \in W}\{r_i\}$ and $t = \sum_{i \in W} q^i \in \Omega(w)$. Clearly $1 \leq s \leq q-1$ (since $r_i \leq q-2$ for all $i \in W \in \mathcal{C}$). 
Then $st \in \supp(\Delta_{w,c})$ and so $st + r \in \supp(\Delta_{w,c})$. Note also that $1 \leq s + r_i \leq q-1$ for each $i \in W$. 
Then there occurs no carry in the $q$-adic addition of $st$ and $ r$.
Hence
\begin{align}
 s_q(st + r) &= s_q(r) + s_q(st) = s_q(r) + (q-1 - \max_{i \in W}\{r_i\})w \nonumber\\
 &> (q-1)w.\label{eqn: claim 8}
\end{align}
Since $st, r < q^n-1$ as well, it follows (from the absence of carry) that $st + r \leq q^n-1$. 
If $st + r = q^n-1 \equiv 0 \pmod{q^n-1}$, then $\Delta_{w,c}(0) = \Delta_{w,c}(st + r)$ and so $0 \in \supp(\Delta_{w,c})$.
This contradicts the fact that $\delta_{w}^{\otimes k}(0) = 0$ for all $1 \leq k \leq q-1$ (since $1 \leq w < n$). 
Then $0 < st + r < q^n-1$ and $st + r = (st + r) \bmod(q^n-1)$. This in conjunction with (\ref{eqn: claim 8}) yields $s_q((st + r) \bmod(q^n-1) ) > (q-1)w$. 
But then $st + r \not\in \supp(\Delta_{w,c})$,
a contradiction. Thus no integer $r$ with $1 \leq r < q^n-1$ and $s_q(r) \leq (q-1)n/2$ can be a period of $\Delta_{w,c}$. 
Necessarily $s_q(r) > (q-1)n/2$. 
Let $r' = q^n-1 - r$. Clearly $\Delta_{w,c}$ is $r'$-periodic. However note that
$1 \leq r' < q^n-1$ and $s_q(r') < (q-1)n/2$, a contradiction. Necessarily $\Delta_{w,c}$ has maximum least period $q^n-1$. This concludes the proof for the case when $c \neq 0$.
\\
\\
{\bf Case 2 ($c = 0$):} Assume $c = 0$. 
Note $\Delta_{w,0} = \delta_0 - \delta_w^{\otimes(q-1)}$ and $\Delta_{w,0}(0) = 1$. 
Thus $\Delta_{w,0}(r) = \Delta_{w,0}(0 + r) = 1$. Since $0 < r < q^n-1$, necessarily $\delta_w^{\otimes(q-1)}(r) = -1$. 
In particular $r \in \supp(\delta_w^{\otimes(q-1)})$ and $s_q(r) = (q-1)w$. Because $1 \leq r < q^n-1$ is a period of $\Delta_{w,0}$, so is $r' = q^n-1 - r$ with $1 \leq r' < q^n-1$. 
Then the previous arguments similarly imply that $s_q(r') = (q-1)w$. Given that
$s_q(r') = (q-1)n - s_q(r)$, it follows $w = n/2$ and $s_q(r) = (q-1)n/2$. In particular $n$ is even and $\Delta_{w,0} = \Delta_{n/2, 0} = \delta_0 - \delta_{n/2}^{\otimes(q-1)}$. 

Consider the case when $n > 2$:
Suppose not all digits of $r$ are the same (since $s_q(r) = (q-1)n/2$, the last is equivalent to supposing that $r \neq (q^n-1)/2$; this is the case in particular when $q$ is even). 
Clearly either there exists $k \in [0, n-2]$ such that $r_k > r_{k+1}$ or the sequence $r_0, \ldots, r_{n-1}$ is non-decreasing. Suppose the former holds. 
Fix any such $k$ and let $\sigma$ be the permutation of $[0, n-1]$ which fixes each index in $[0, n-1]\setminus \{k, k+1\}$ and maps $k \mapsto k+1$ and $k+1 \mapsto k$. 
Thus 
\begin{equation}\label{eqn: sigma}
\varphi_\sigma(r) = r_k q^{k+1} + r_{k+1} q^k + \sum_{i \in [0, n-1]\setminus \{k, k+1\}} r_i q^i >  r_{k+1}q^{k+1} + r_kq^k  + \sum_{i \in [0, n-1]\setminus\{k, k+1\}} r_i q^i = r,
\end{equation}
since $r_k > r_{k+1}$. Because $\varphi_\sigma(r)$ is obtained via a permutation of the digits of $r$, and $0 < r < q^n-1$, then $0 < \varphi_\sigma(r) < q^n-1$. Now note
\begin{align}\label{eqn: simga 1}
 \varphi_\sigma(r) - r &= (r_k - r_{k+1})q^{k+1} - (r_k - r_{k+1})q^k \nonumber \\
 &= (r_k - r_{k+1} - 1)q^{k+1} + (q - (r_k - r_{k+1}))q^{k}.
\end{align}
Since $1 \leq r_k - r_{k+1} \leq q-1$, it follows the above coefficients are contained in the set $[0, q-1]$; hence this is the $q$-adic form of $\varphi_\sigma(r) - r$ and one can see that 
$s_q(\varphi_\sigma(r) - r) = q-1$.

Because $\delta_{n/2}$ is $q$-symmetric with $\epsilon_i(m) \leq 1$ for each $m \in \supp(\delta_{n/2}) = \Omega(n/2)$ and each $0 \leq i \leq n-1$, 
it follows from Lemma \ref{lem: convolution of q-sym is q-sym} that $\delta_{n/2}^{\otimes(q-1)}$ is $q$-symmetric. 
In particular $\delta_{n/2}^{\otimes(q-1)}(\varphi_\sigma(r)) = \delta_{n/2}^{\otimes(q-1)}(r)$. Since $\varphi_\sigma(r) \neq 0$, then 
$\Delta_{n/2, 0}(\varphi_\sigma(r)) = -\delta_{n/2}^{\otimes(q-1)}(\varphi_\sigma(r)) = -\delta_{n/2}^{\otimes(q-1)}(r) = 1$; hence $\varphi_\sigma(r) \in \supp(\Delta_{n/2, 0})$.
Given that $\Delta_{n/2, 0}$ is $r$-periodic, then $\varphi_\sigma(r) - r \in \supp(\Delta_{n/2, 0})$. Since $0 < \varphi_\sigma(r) - r < q^n-1$, 
then $\varphi_\sigma(r) - r \in \supp(\delta_{n/2}^{\otimes(q-1)})$. It follows $s_q(\varphi_\sigma(r) - r) = (q-1)n/2$, contradicting $s_q(\varphi_\sigma(r) - r) = q-1$ with $n > 2$.
Necessarily the $q$-digits $r_0, \ldots, r_{n-1}$ of $r$ must form a non-decreasing sequence. Since not all digits of $r$ are the same, in particular $r_{n-1} > r_0$.

Since $\Delta_{n/2, 0}$ is $r$-periodic, it is $r'' := (qr \bmod(q^n-1))$-periodic. Note $0 < r'' < q^n-1$ and 
$r'' = (r_{n-1}, r_0, r_1, \ldots, r_{n-2})_q$. However observe that $r_{0} = \epsilon_1(r'') < \epsilon_0(r'') = r_{n-1}$. 
Then we can reproduce the previous arguments with $r$ and $k$ substituted with $r''$ and $0$, respectively, to obtain a contradiction.
Thus for $n > 2$, it is impossible that $\Delta_{n/2, 0}$ is $r$-periodic if $0 < r < q^n-1$ and not all digits of $r$ are the same. 
In particular when $q$ is even and $n > 2$, 
$\Delta_{n/2, 0}$ must have maximum least period $q^n-1$. 

Note that at this point the proof of (i) is complete. In the case of (ii), with $q$ odd and $n > 2$,
we have shown that either $r = (q^n-1)/2$ (all digits of $r$ are the same) or no such $r$ with $0 < r < q^n-1$
can be a period of $\Delta_{n/2, 0}$ (when not all digits of $r$ are the same), whence the least period of $\Delta_{n/2, 0}$ must be the maximum, $q^n-1$. 
Thus the proof of (ii) is complete as well. 

Consider now the case, (iii), with $n = 2$ and $q$ odd: Here $w = n/2 = 1$ and we need to show $r > q-1$. On the contrary, suppose $r \leq q-1$. 
Since $s_q(r) = (q-1)n/2 = q-1$, it follows that $r = q-1$. Note there is exactly one way to write $r = q-1$ as a sum of $q-1$ ordered elements in $\Omega(1) = \{1, q\}$, 
namely as $q - 1 = 1 + \cdots + 1$, a total of 
$q-1$ times. Thus $\delta_{1}^{\otimes(q-1)}(r) = 1$. This contradicts the fact (see the beginning of the proof of Case 2) that $\delta_{1}^{\otimes(q-1)}(r) = -1$ with $q$ odd. 
This completes the proof of (iii) and of Case 2 here. 

It remains to notice from (i), (ii), (iii), that the least period $r$ of $\Delta_{w,c}$ satisfies $r > (q^n-1)/\Phi_n(q)$ in every case. 
Indeed, both (i), (ii) follow immediately from the fact that $\Phi_n(q) > q-1$ for $n \geq 2$. 
In the case of (iii), we have $r > q-1 = (q^2-1)/(q + 1) = (q^2-1)/\Phi_2(q)$ as well.
This concludes the proof of Lemma \ref{lem: period of delta and hansen-mullen}.
\end{proof}

\begin{proof}[{\bf Proof of Theorem \ref{thm: hansen-mullen}}]
It is elementary to show that every element of $\F^*$ is the norm of an element of degree $n$ over $\F$; see for example \cite{hansen-mullen}. Thus we may assume $w < n$. 
In view of the symmetry between the coefficients of a polynomial and 
 its reciprocal, as well as the fact that a polynomial is irreducible if and only if so is its reciprocal, we may further assume $1 \leq w \leq n/2$. Now the result follows from
 Lemma \ref{lem: period of delta and hansen-mullen} together with Lemma \ref{lem: delta function}. 
\end{proof}


\begin{thebibliography}{115}
\bibitem{Aigner}
M. Aigner and G.M. Ziegler, {\em Proofs from the book}, 4th ed.,
Springer-Verlag, Berlin, 2010.

\bibitem{Barvinok}
A. Barvinok, {\em Matrices with prescribed row and column sums}, Linear Algebra Appl.  436 (2012), no. 4,  820--844.

\bibitem{Bourgain}
J. Bourgain, {\em Prescribing the binary digits of primes, II}, Israel J. Math.
206 (2015), no. 1, 165--182. 


\bibitem{Canteaut}
A. Canteaut and M. Videau, {\em Symmetric boolean functions}, IEEE Transactions on Information
Theory, Institute of Electrical and Electronics Engineers 51 (2005), no. 8, 2791--2811.

\bibitem{Carlitz}
L. Carlitz, {\em A theorem of Dickson on irreducible polynomials}, Proc. Amer. Math. Soc. 3 (1952), 693--700.

\bibitem{Castro}
F.N. Castro and L.A. Medina, {\em Linear recurrences and asymptotic behavior of exponential sums of symmetric boolean functions}, Electr. J. Comb. 18 (2011), no. 2, paper \#P9.

\bibitem{Cohen 2004}
S.D. Cohen, {\em Primitive polynomials over small fields},  Finite fields and applications, 197--214, Lecture Notes in Comput. Sci., 2948, Springer, Berlin, 2004. 


\bibitem{cohen 2006}
S.D. Cohen, {\em Primitive polynomials with a prescribed coefficient}, Finite Fields Appl. 12 (2006), no. 3, 425--491.


\bibitem{cohen-presern 2006}
S.D. Cohen and M. Pre\v{s}ern, {\em Primitive polynomials with prescribed second
coefficient}, Glasgow Math. J. 48 (2006), 281--307.


\bibitem{cohen-presern 2008}
S.D. Cohen and M. Pre\v{s}ern, {\em The Hansen-Mullen primitivity conjecture:
completion of proof}, Number theory and polynomials, 89--120, London Math. Soc. Lecture Note Ser., 352, Cambridge Univ. Press, Cambridge, 2008.



\bibitem{fan-han}
S.Q. Fan and W.B. Han,
{\em p-Adic formal series and primitive polynomials over finite fields}
Proc. Amer. Math. Soc. 132 (2004),  15--31.

\bibitem{Fitzgerald-Yucas}
R.W. Fitzgerald and J.L. Yucas, {\em Irreducible polynomials over GF(2) with three prescribed coefficients}, Finite Fields Appl. 9 (2003), 286--299.



\bibitem{garefalakis}
T. Garefalakis, {\em Irreducible polynomials with consecutive zero coefficients}, Finite Fields Appl. 14 (2008), no. 1, 201--208.

\bibitem{Ha}
J. Ha, {\em Irreducible polynomials with several prescribed coefficients}, arXiv:1601.06867 [math.NT], preprint (2016).

\bibitem{ham-mullen}
K.H. Ham and G.L. Mullen, {\em Distribution of irreducible polynomials of small degrees over finite fields}, Math. Comp. 67 (1998), no. 221, 337--341.

\bibitem{han}
W.B. Han, {\em On Cohen's problem}, Chinacrypt ’96, Academic Press (China) (1996) 231--235 (Chinese).

\bibitem{hansen-mullen}
T. Hansen and G.L. Mullen, {\em Primitive polynomials over finite fields}, Math. Comp. 59 (1992), 639--643.

\bibitem{Koc}
\c{C}.K. Ko\c{c}, {\em Open Problems in Mathematics and Computational Science}, Springer International Publishing, 2014.

\bibitem{KMRV}
K. Kononen, M. Moisio, M. Rinta-aho, K. V\"{a}\"{a}n\"{a}nen, {\em Irreducible polynomials with prescribed trace and restricted norm},
JP J. Algebra Number Theory Appl. 11 (2009), 223--248.


\bibitem{Lint}
J.H. van Lint and R.M. Wilson, {\em A course in combinatorics}, 2nd ed., Cambridge
University Press, Cambridge, 2001.

\bibitem{KPW}
B. Omidi Koma, D. Panario, Q. Wang, {\em The number of irreducible polynomials of degree $n$ over $\F$ with given trace and
constant terms}, Discrete Math. 310 (2010), 1282--1292.


\bibitem{George paper}
D. Panario and G. Tzanakis,
{\em A generalization of the Hansen--Mullen conjecture on irreducible polynomials over finite fields}, Finite Fields Appl. 18 (2) (2012) 303--315.



\bibitem{TW}
A. Tuxanidy and Q. Wang, {\em On the number of $N$-free elements with prescribed trace}, J. Number Theory 160 (2016), 536--565.

\bibitem{pollack}
P. Pollack, {\em Irreducible polynomials with several prescribed coefficients}, Finite Fields Appl. 22 (2013), 70--78.

\bibitem{ren}
D-B. Ren, {\em On the coefficients of primitive polynomials over finite fields}, Sichuan
Daxue Xuebao 38 (2001), 33--36.

\bibitem{Shparlinski}
I. E. Shparlinski, {\em On primitive polynomials}, Prob. Peredachi Inform. 23 (1988),
100--103 (Russian).



\bibitem{George thesis}
G. Tzanakis, {\em On the existence of irreducible polynomials with prescribed coefficients over finite fields}, Master's thesis,
Carleton University, 2010, http://www.math.carleton.ca/~gtzanaki/mscthesis.pdf.

\bibitem{wan}
D. Wan, {\em Generators and irreducible polynomials over finite fields}, Math. Comp. 66 (1997), no. 219, 1195--1212.






\end{thebibliography}
\end{document}